\pgfplotsset{width=10cm,compat=1.15}
\newtheorem{thm}{Theorem}[section]
\newtheorem{cor}[thm]{Corollary}
\newtheorem{lem}[thm]{Lemma}
\theoremstyle{definition}
\newtheorem{definition}[thm]{Definition}
\newtheorem{rem}[thm]{Remark}
\numberwithin{equation}{section}
\renewcommand{\leqslant}{\leq}
\renewcommand{\geqslant}{\geq}
\newcommand{\numberset}{\mathbb}
\newcommand{\N}{\numberset{N}}
\newcommand{\R}{\numberset{R}}
\newcommand{\snr}[1]{\lvert #1\rvert}
\newcommand{\eqmathbox}[2][eq]{\eqmakebox[#1]{$\displaystyle #2$}}
\def\loc{\operatorname{loc}}
\definecolor{citation}{rgb}{0.2,0.58,0.2} 
\definecolor{formula}{rgb}{0.1,0.2,0.6}
\definecolor{url}{rgb}{0.3,0,0.5} 
\def\vu{{\Tilde{u}_\varepsilon}}
\def\vl{{\ell_\omega}}
\def\vL{{\mathcal{L}}}
\def\vm{\mu} 
\def\v{{\varsigma}}
\def\ir{{^{\iota}_{\rho}}}
\def\os{{_{\omega,\sigma}}}
\def\ios{{^{\iota}_{\omega,\sigma}}}
\def\is{{^{\iota}_{\sigma}}}
\def\ei{{E^{\iota}}}
\def\oe{{_{\omega,\varepsilon}}}
\def\ose{{_{\omega,\sigma_{\varepsilon}}}}
\newcommand{\nr}[1]{\lVert #1 \rVert}
\definecolor{ashgrey}{rgb}{0.86, 0.86, 0.86}
\definecolor{lightgray}{rgb}{0.75, 0.75, 0.75}
\definecolor{gray}{rgb}{0.5, 0.5, 0.5}
\title[Nearly linear multi-phase problems]{Regularity for multi-phase problems \\ at nearly linear growth}
\author[De Filippis]{Filomena De Filippis}  
\author[Piccinini]{Mirco Piccinini}  \address{Filomena De Filippis\\Dipartimento di Scienze Matematiche, Fisiche e Informatiche, Universit\`a di Parma\\ Parco Area delle Scienze 53/a, Campus, 43124 Parma, Italy}
\email{\url{filomena.defilippis@unipr.it}}
\address{Mirco Piccinini\\Dipartimento di Scienze Matematiche, Fisiche e Informatiche, Universit\`a di Parma\\ Parco Area delle Scienze 53/a, Campus, 43124 Parma, Italy}
\email{\url{mirco.piccinini@unipr.it}}
\begin{document}

	\subjclass[2020]{49N60, 35J60}
	
	\keywords{Regularity, nonuniform ellipticity, nearly linear growth.}

	\thanks{{\it Aknowledgements.}
 The first author is supported by INdAM - GNAMPA project ``Regolarita' per problemi ellittici e parabolici con crescite non standard'', CUP\_E53C22001930001
     The second author is supported by INdAM project  ``Problemi non locali: teoria cinetica e non uniforme ellitticit\`a'', CUP\_E53C220019320001 and by the Project ``Local vs Nonlocal: mixed type operators and nonuniform ellipticity", CUP\_D91B21005370003.}

\begin{abstract}	 
	Minima of the log-multiphase variational integral
 \[
 w \mapsto \int_{\Omega} \left[|Dw|\log(1+|Dw|) + a(x)|Dw|^q + b(x)|Dw|^s\right] \, {\rm d}x\,, 
 \]
 have locally H\"older continuous gradient under sharp quantitative bounds linking the growth powers $(q,s)$ to the H\"older exponents of the modulating coefficients $a(\cdot)$ and $b(\cdot)$ respectively.
\end{abstract}

			\maketitle
\vspace{-4mm}
\section{Introduction}
In this paper we prove the validity of Schauder theory for a new class of variational integrals at nearly linear growth featuring multiple phases, under optimal assumptions quantifying the rate of nonuniform ellipticity. The model we have in mind is the log-multiphase energy, i.e.:
\begin{flalign} \label{functional}
\begin{array}{c}
\displaystyle
   W^{1,1}_{\loc}(\Omega)\ni w\mapsto    \mathcal{L}(w) \coloneqq \int_{\Omega} H(x,Dw) \, {\rm d}x\\[10pt]\displaystyle H(x,z):=\snr{z}\log(1+\snr{z})+a(x)\snr{z}^{q}+b(x)\snr{z}^{s},
    \end{array}
\end{flalign}
where $\Omega \subset \mathds{R}^n$, $n\ge 2$ is an open set, $1<q,s<\infty$ and $a(\cdot)$, $b(\cdot)$ are nonnegative, bounded coefficients. The crucial feature of integrand $H(\cdot)$, that makes functional $\mathcal{L}(\cdot)$ relevant under the regularity theory viewpoint, relies in the combination of its strong nonuniformly elliptic character with the lack of control over the zero-set of the modulating coefficients $a(\cdot)$, $b(\cdot)$. In fact, following the classification in \cite{dm}, the related pointwise ellipticity ratio
\begin{flalign}\label{per}
\mathcal{R}_{\partial H(\cdot)}(x,z):=\frac{\mbox{highest eigenvalue of }\partial^{2}H(x,z)}{\mbox{lowest eigenvalue of }\partial^{2}H(x,z)}\lesssim 1+\log(1+\snr{z})
\end{flalign}
and the nonlocal one
\begin{eqnarray}\label{ner}
\bar{\mathcal{R}}_{\partial H(\cdot)}(z;B)&:=&\sup_{\substack{\text{$x\in B$}\\ \text{$B\Subset \Omega$ ball}}}\frac{\textnormal{highest eigenvalue of }\partial H(x,z)}{\textnormal{lowest eigenvalue of }\partial H(x,z)}\nonumber \\
&\approx& 1+\nr{a}_{L^{\infty}(B)}\snr{z}^{q-1}+\nr{b}_{L^{\infty}(B)}\snr{z}^{s-1},
\end{eqnarray}
both blow up on large values of the gradient variable. In particular, the behavior at infinity of the nonlocal ellipticity ratio suggests that, rather than chasing competition, each power-type phase interacts only with the superlinear, elliptic one. This aspect is reflected into the possibility of assigning moduli of continuity to $a(\cdot)$ and $b(\cdot)$ without imposing any quantitative relation among them, and no geometric information on the zero level sets of coefficients is required to gain maximal regularity for minima. The main result of this paper is indeed recorded in the following theorem.
\begin{thm} \label{theorem4.1}
Let $u\in W^{1,1}_{\loc}(\Omega)$ be a local minimizer of functional \eqref{functional}, with modulating coefficients $a(\cdot)$, $b(\cdot)$ satisfying
\begin{flalign}\label{abab}
0\le a(\cdot)\in C^{0,\alpha}(\Omega),\qquad \quad 0\le b(\cdot)\in C^{0,\beta}(\Omega),
\end{flalign}
for some $\alpha,\beta\in (0,1]$, and growth exponents $(q,s)$ such that
\begin{equation} \label{q_s}
    1 < q < 1 + \frac{\alpha}{n}, \qquad\quad  \quad  1 < s < 1 + \frac{\beta}{n}
\end{equation}
is verified. Then $Du$ is locally H\"older continuous in $\Omega$. Moreover, given any ball $B_{r}\Subset \Omega$, $r\in (0,1]$, the Lipschitz bound
\begin{align} \label{l_infty_estimate}
  ||Du||_{L^{\infty}(B_{r/2})} \leqslant c \left ( \fint_{B_r} 1+H(x,Du)\, {\rm d}x\right)^{\theta} + c,
\end{align}
holds true with $c \equiv c(n,q,s,||a||_{C^{0,\alpha}}, ||b||_{C^{0,\beta}})$ and $\theta \equiv \theta(n,q,s,\alpha,\beta) > 0$. 
\end{thm}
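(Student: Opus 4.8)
The plan is to follow the by-now standard strategy for Schauder estimates under nonuniform ellipticity: regularization of the functional, uniform a priori Lipschitz bounds for the regularized minimizers, and a final approximation/lower-semicontinuity step. Because the principal part $|z|\log(1+|z|)$ has nearly linear growth, the natural functional setting is an Orlicz–Sobolev space modeled on $\Phi(t)=t\log(1+t)$, and the perturbations $a(x)|z|^q$, $b(x)|z|^s$ are of Orlicz–Musielak type with mild superlinear growth governed by \eqref{q_s}. First I would fix a ball $B_r\Subset\Omega$, freeze and mollify the coefficients, and add a vanishing uniformly elliptic term $\sigma_\varepsilon(\mu_\varepsilon^2+|z|^2)^{p/2}$ (with $p$ slightly above $1$) to obtain a sequence of smooth, uniformly convex integrands $H_\varepsilon(x,z)$ whose minimizers $u_\varepsilon$ are smooth by classical theory and converge to $u$ in the appropriate topology; the whole point is then to bound $\|Du_\varepsilon\|_{L^\infty(B_{r/2})}$ independently of $\varepsilon$.

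The core of the argument is a Moser-type iteration on the nonlinear expression $V_\varepsilon(Du_\varepsilon)$ combined with a Caccioppoli inequality on level sets. Testing the Euler–Lagrange system (or its differentiated form) against $\eta^2 (|Du_\varepsilon|-\kappa)_+^\gamma D_j u_\varepsilon$-type functions produces the second-order Caccioppoli inequality; the subtlety is that the ellipticity ratio \eqref{per} is not bounded but only logarithmically unbounded, while the extra phases contribute error terms of the form $\|a\|_\infty |Du_\varepsilon|^{q-2}|D^2u_\varepsilon|^2$ and $\|b\|_\infty |Du_\varepsilon|^{s-2}|D^2u_\varepsilon|^2$. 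The key point is that these are reabsorbed thanks to the smallness of the logarithmic ellipticity ratio against the gap conditions \eqref{q_s}: quantitatively, one needs $q-1$ and $s-1$ to be dominated by $\alpha/n$ and $\beta/n$, which is exactly what guarantees that the interpolation of $a(\cdot)\in C^{0,\alpha}$ (resp. $b(\cdot)\in C^{0,\beta}$) with a Sobolev embedding for $Du_\varepsilon$ closes the estimate. I would first establish a higher integrability gain $Du_\varepsilon\in L^{\Phi_1}_{\loc}$ with $\Phi_1$ a slightly faster Orlicz function, using the $C^{0,\alpha}$/$C^{0,\beta}$ regularity of the coefficients via a comparison with the frozen-coefficient problem and the $p$-Laplacian-type self-improving property; this plays the role of the "initial step" of the iteration.

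Once a foothold of integrability above the natural one is secured, the plan is to run the iteration: on each scale, the Caccioppoli inequality on the super-level sets $\{|Du_\varepsilon|>\kappa\}$ together with Sobolev's inequality yields a reverse-Hölder-type gain for $\Phi(|Du_\varepsilon|)$ with a controlled constant; iterating over a geometric sequence of radii between $r/2$ and $r$, and summing the (convergent, because of \eqref{q_s}) series of exponents, gives the $L^\infty$ bound \eqref{l_infty_estimate} with $\theta$ depending only on $n,q,s,\alpha,\beta$ and $c$ additionally on the norms $\|a\|_{C^{0,\alpha}}$, $\|b\|_{C^{0,\beta}}$. The passage to the limit $\varepsilon\to 0$ is then routine: the uniform bound transfers to $u$ by weak-$*$ convergence of $Du_\varepsilon$ and lower semicontinuity, giving \eqref{l_infty_estimate}, and local Hölder continuity of $Du$ follows from the now-uniform ellipticity on $B_{r/2}$ via De Giorgi–Nash–Moser applied to the differentiated equation. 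The main obstacle I expect is the reabsorption step in the Caccioppoli inequality: one must track carefully how the logarithmic loss in \eqref{per} interacts with the power-type error terms so that the exponents in the iteration stay summable, and this is precisely where the sharpness of the bounds \eqref{q_s} is used — any weaker control on $(q,s)$ would break the convergence of the iteration.
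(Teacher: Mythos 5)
There is a genuine gap: the core of your proposal is a Moser-type iteration built on a second-order Caccioppoli inequality obtained by testing the differentiated Euler--Lagrange system, and this is precisely the approach the paper explicitly identifies as \emph{not} suited to the problem at hand. Two concrete obstructions arise. First, differentiating the Euler--Lagrange equation with respect to $x$ produces terms involving $Da(\cdot)$ and $Db(\cdot)$; under the mere H\"older regularity \eqref{abab} these derivatives do not exist, so the classical second-order Caccioppoli inequality cannot even be written. Second, and more fundamentally, even if one assumed $a,b\in W^{1,\infty}$ so that Moser's iteration could be run (as in De Filippis--Mingione \cite{dm}, Di Marco--Marcellini \cite{DiMa}, Eleuteri--Marcellini--Mascolo \cite{EMM1,EMM2}), the resulting exponent constraint is the trivialization $\max\{q,s\}<1+\min\{\alpha,\beta\}/n$ rather than the decoupled bounds \eqref{q_s}: Moser's iteration mixes the phases through a single integrability exponent and cannot pair $q$ with $\alpha$ and $s$ with $\beta$ independently. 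Your remark that the interpolation ``closes the estimate'' thanks to \eqref{q_s} is exactly the step that cannot be made rigorous by this route; the convergence of the iteration would only be guaranteed under the worst-case pairing.

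The paper instead follows the hybrid perturbation strategy of \cite{DFM,DFM23}. No differentiation of the coefficients is performed. One compares the (regularized, rescaled) minimizer on a small ball $B_{8h}$ with the solution $v$ of a frozen-coefficient Dirichlet problem, whose Lipschitz bound is provided by Theorem \ref{teorema3.1}; the comparison estimate (\emph{Step 4}) uses only \eqref{holder_h}, i.e.\ the H\"older moduli of $a,b$. Finite differences on the level sets of $E_\rho(\cdot,|Du_\rho|)$ plus a covering argument then yield a \emph{fractional} Caccioppoli inequality (\emph{Step 5}), placing $(E_\rho(\cdot,|Du_\rho|)-k)_+$ in $W^{\gamma,2}$ with right-hand side controlled by four separate integrals, each weighted by the ``correct'' power of the radius ($\rho^\alpha,\rho^\beta,\rho^{\alpha/2},\rho^{\beta/2}$). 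The $L^\infty$ bound then follows not by Moser's iteration but by the De Giorgi-type Lemma \ref{revlem}, whose right-hand side involves four distinct Havin--Mazya--Wolff potentials $P^{l_j,\theta_j}_{2,\sigma_j}$ with $\sigma_j\in\{\alpha,\beta,\alpha/2,\beta/2\}$; the embedding Lemma \ref{lem2.4} requires $nl_j\theta_j/(2\sigma_j)<1$ for each $j$ separately, which is exactly $q<1+\alpha/n$ and $s<1+\beta/n$. This potential-theoretic decoupling is the mechanism that keeps the phases independent and is the key idea missing from your proposal.
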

To keep the amount of technicalities involved in the proof at a reasonable level, Theorem \ref{theorem4.1} is stated for the model example \eqref{functional}, but, after very minor modifications, we are able to handle an arbitrarily large, finite number of phases exhibiting the same phenomenology displayed in \eqref{abab}-\eqref{q_s}.
\begin{cor}\label{maincor}
    Let $k\ge 1$ be an integer, and $u\in W^{1,1}_{\loc}(\Omega)$ be a local minimizer of functional
    \begin{flalign}\label{multif}
    W^{1,1}_{\loc}(\Omega)\ni w\mapsto \int_{\Omega}\snr{Dw}\log(1+\snr{Dw})+\sum_{i=1}^{k}a_{i}(x)\snr{Dw}^{q_{i}}\, {\rm d}x,
    \end{flalign}
    with modulating coefficients $0\le a_{i}(\cdot)\in C^{0,\alpha_{i}}(\Omega)$, for some $\alpha_{i}\in (0,1]$, $i\in\{1,\cdots,k\}$, and growth exponents $q_{i}$ verifying
    \begin{flalign}\label{qi}
    1<q_{i}<1+\frac{\alpha_{i}}{n},
    \end{flalign}
     $\mbox{for all} \ \ i\in \{1,\cdots,k\}$. Then $Du$ is locally H\"older continuous in $\Omega$, and a $W^{1,\infty}$-energy estimate analogous to \eqref{l_infty_estimate} holds true on any ball compactly contained in the domain.
\end{cor}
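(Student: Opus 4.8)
The strategy is to re-run the proof of Theorem \ref{theorem4.1} with the two power phases $a(x)\snr{z}^{q}+b(x)\snr{z}^{s}$ systematically replaced by the finite sum $\sum_{i=1}^{k}a_{i}(x)\snr{z}^{q_{i}}$, updating the bookkeeping accordingly. What makes this possible without any genuinely new argument is the structural feature already highlighted for $k=2$: by \eqref{ner}, the nonlocal ellipticity ratio of $\widetilde{H}(x,z):=\snr{z}\log(1+\snr{z})+\sum_{i=1}^{k}a_{i}(x)\snr{z}^{q_{i}}$ is comparable to $1+\sum_{i}\nr{a_{i}}_{L^{\infty}}\snr{z}^{q_{i}-1}$, so distinct power phases never compete \emph{with one another} -- each of them interacts only with the superlinear, uniformly elliptic base density $\snr{z}\log(1+\snr{z})$. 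Therefore every estimate that in the two-phase proof is performed once ``for $a$ against the log-term'' and once ``for $b$ against the log-term'' is here performed ``for $a_{i}$ against the log-term'' for each $i\in\{1,\dots,k\}$, and the thresholds \eqref{qi} are imposed separately, in perfect analogy with \eqref{q_s}; in particular no geometric information on the zero sets of the $a_{i}$'s is used, exactly as for $k=2$.

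Concretely, I would first reproduce the same approximation scheme: mollify the coefficients $a_{i}$, add a nondegenerating term, and regularise the base density, so as to obtain smooth, uniformly elliptic integrands $\mathcal{L}_{\varepsilon}$ whose minimisers $u_{\varepsilon}$ converge to $u$; since the energy is additive in the $k$ phases, the uniform energy bounds, the convergence, and the passage to the limit go through unchanged. Next comes the a priori Lipschitz estimate, which is the source of \eqref{l_infty_estimate}: the Caccioppoli inequality for $Du_{\varepsilon}$ and the ensuing iterative scheme only exploit that the \emph{pointwise} ellipticity ratio \eqref{per} of $\widetilde{H}$ is still bounded by $1+\log(1+\snr{z})$, and this persists for any finite number of phases because each Hessian $\partial^{2}\big(a_{i}(x)\snr{z}^{q_{i}}\big)$ has eigenvalues comparable to $a_{i}(x)\snr{z}^{q_{i}-2}$, hence is dominated by the leading log-term after the usual splitting between small and large values of $\snr{Du_{\varepsilon}}$. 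One thus obtains the analogue of \eqref{l_infty_estimate} with $c$ and $\theta$ now depending on the full list $\big(n,q_{i},\nr{a_{i}}_{C^{0,\alpha_{i}}}\big)_{i=1,\dots,k}$.

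The heart of the matter -- and the step I expect to demand the most care -- is the perturbation argument upgrading the Lipschitz bound to $C^{1,\gamma}_{\loc}$. On a small ball $B_{r}$ one freezes every coefficient, replacing $a_{i}$ by its average $(a_{i})_{B_{r}}$, and compares $u_{\varepsilon}$ with the minimiser $v$ of the resulting constant-coefficient energy, whose density is of Uhlenbeck type at nearly linear growth plus constant-coefficient power terms; the decay of the excess of $Dv$ is precisely the ingredient already established within the proof of Theorem \ref{theorem4.1}. The error introduced by the freezing is controlled, phase by phase, by $\nr{a_{i}}_{C^{0,\alpha_{i}}}\,r^{\alpha_{i}}\fint_{B_{r}}\snr{Du_{\varepsilon}}^{q_{i}}\,{\rm d}x$, and invoking the higher integrability $Du_{\varepsilon}\in L^{q_{i}+\delta_{i}}_{\loc}$ together with the sup-bound from the previous step, the exponent count that makes each of these terms carry a strictly positive power of $r$ is exactly $q_{i}<1+\alpha_{i}/n$ -- whence the sharpness of \eqref{qi}. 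Since the $k$ freezing errors are mutually decoupled, they are estimated in parallel, and the smallest of the resulting gain exponents fixes $\gamma$; iterating the excess-decay inequality in the standard fashion yields local Hölder continuity of $Du_{\varepsilon}$ with constants uniform in $\varepsilon$, and letting $\varepsilon\to 0$ transfers both the $C^{1,\gamma}_{\loc}$-regularity and the $W^{1,\infty}$-bound to $u$. The only genuinely delicate point is the bookkeeping -- keeping the $k$ iteration exponents and the dependence of all constants on the full set of parameters under control -- but no new phenomenon arises beyond what is already present for $k=2$.
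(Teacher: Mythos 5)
Your top-level plan --- repeat the proof of Theorem \ref{theorem4.1} phase by phase, playing each $a_{i}(x)|Dw|^{q_{i}}$ against the logarithmic term alone and imposing \eqref{qi} separately --- is exactly the paper's (one-line) proof of Corollary \ref{maincor}, and your structural observation that the phases decouple is the right reason why only bookkeeping changes. However, your description of \emph{how} the re-run works contains a genuine flaw that would break the argument if implemented as written. You claim the Lipschitz bound (the analogue of \eqref{l_infty_estimate}) for the approximating minimizers follows from a Caccioppoli inequality plus iteration exploiting only that the pointwise ellipticity ratio \eqref{per} is $\lesssim 1+\log(1+|z|)$, with the conditions \eqref{qi} entering only later, in an excess-decay upgrade to $C^{1,\gamma}$ via higher integrability $Du_{\varepsilon}\in L^{q_{i}+\delta_{i}}_{\loc}$. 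This cannot work: with merely H\"older-continuous $a_{i}$ one cannot differentiate the equation in $x$, so no Moser-type scheme for the full (non-frozen) problem is available; and, as the introduction stresses, even for Sobolev coefficients such a scheme only yields the trivialized constraint $\max_{i}q_{i}<1+\min_{i}\alpha_{i}/n$, not the phase-wise \eqref{qi}. Moreover, gradient boundedness itself fails without \eqref{qi} (this is the content of the counterexamples cited in the paper), so the sharp conditions cannot be postponed to the H\"older step; correspondingly, $\theta$ in \eqref{l_infty_estimate} depends on $\alpha,\beta$, whereas your constants do not see the $\alpha_{i}$.

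In the paper the conditions \eqref{q_s} (hence \eqref{qi}) are consumed precisely in producing the $L^{\infty}$ bound: first in the Lavrentiev approximation (Lemma \ref{lemma_lavrentiev}, where $\alpha-n(q-1)>0$ is used), and then in Steps 3--6 of the proof, where one compares with the frozen problem of Theorem \ref{teorema3.1} (frozen at the \emph{infimum} $a^{\iota}_{\sigma}$, not the average, so that minimality transfers), derives a fractional Caccioppoli inequality, and controls the resulting terms by nonlinear potentials; Lemma \ref{lem2.4} is applied with $\gamma=1$, which requires $n(q_{i}-1+\delta_{2})/\alpha_{i}<1$, i.e.\ exactly $q_{i}<1+\alpha_{i}/n$, and allows control by the $L^{1}$ norm of the gradient --- no Gehring-type higher integrability in $L^{q_{i}+\delta_{i}}$ is invoked (and none is readily available at nearly linear growth, since on $\{a_{i}=0\}$ the energy only controls $|Du|\log(1+|Du|)$). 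Once the uniform Lipschitz bound is in hand, gradient H\"older continuity follows by standard means (Step 8, quoting \cite{DFM23}), not the other way around. So the corollary does follow by repeating the paper's Steps 1--8 with $k$ phases, but the repetition must keep the paper's architecture (frozen comparison $+$ fractional Caccioppoli $+$ potential estimates for the Lipschitz bound), not the Moser-then-perturb architecture you describe.
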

Let us put our results in the context of the available literature. The log-multiphase integral \eqref{functional} consists in the sum of the classical nearly linear growing energy
\begin{flalign}\label{loglog}
W^{1,1}_{\loc}(\Omega)\ni w\mapsto \int_{\Omega}\snr{Dw}\log(1+\snr{Dw})\, {\rm d}x,
\end{flalign}
studied by Frehse \& Seregin \cite{FrS}, Fuchs \& Seregin \cite{FS1}, Fuchs \& Mingione \cite{FM}, Di Marco \& Marcellini \cite{DiMa}, and several power-type terms, weighted by possibly vanishing coefficients. The terminology "log-multiphase", or "nearly linear growth" accounts for the behavior at infinity of the integrand in \eqref{loglog}, superlinear yet slower than any power larger than one. Functionals \eqref{functional}, \eqref{multif} play a crucial role in the theory of plasticity with logarithmic hardening, a borderline scenario between power hardening \cite{FS1} and perfect plasticity \cite{gme,gm1,gk}. Moreover, under the regularity theory viewpoint, integrals \eqref{functional}, \eqref{multif} can be seen as the limiting configuration as $p\to 1$ of the multi-phase energy
\begin{flalign}\label{mp}
\begin{array}{c}
\displaystyle
W^{1,p}_{\loc}(\Omega)\ni w\mapsto \int_{\Omega}\snr{Dw}^{p}+\sum_{i=1}^{k}a_{i}(x)\snr{Dw}^{q_{i}}\, {\rm d}x\\[10pt]\displaystyle
0\le a_{i}(\cdot)\in C^{0,\alpha_{i}}(\Omega), \ \ \alpha_{i}\in (0,1],\qquad\quad
1<p\le q\le 1+\frac{\alpha_{i}}{n},
\end{array}
\end{flalign}
first treated by De Filippis \& Oh \cite{deoh}, and later on developed in several directions: Calder\'on-Zygmund type estimates \cite{bbo,demine}, maximal regularity in nonhomogeneous function spaces \cite{bb90}, fully nonlinear elliptic equations \cite{dr}, and free transmission problems \cite{df}. The main point in multiphase problems is the presence of several phase transitions, and the corresponding need of carefully tracking the amount of regularity preserved by solutions across the zero level sets $\{x\in \Omega\colon a_{i}(x)=0\}$, that is, when the functional tends to lose part of its ellipticity properties or switch its kind of growth. In turn, multiphase integrals cover the case of multiple phase transitions in double phase problems, whose prototypical model example is given by
\begin{flalign}\label{dp}
\begin{array}{c}
\displaystyle
W^{1,p}_{\loc}(\Omega)\ni w\mapsto \int_{\Omega}\snr{Dw}^{p}+a(x)\snr{Dw}^{q}\, {\rm d}x\\[10pt]\displaystyle
0\le a(\cdot)\in C^{0,\alpha}(\Omega), \ \ \alpha\in (0,1],\qquad\quad
1<p\le q\le 1+\frac{\alpha}{n},
\end{array}
\end{flalign}
modelling the mixture of fluids with different viscosities, first introduced by Zhikov \cite{jko,zh1} in the setting of homogenization or for studying possible occurrences of Lavrentiev phenomenon. A comprehensive regularity theory for double phase problems has been obtained by Baroni \& Colombo \& Mingione in a series of seminal papers \cite{BCM,comi2,comicz,comi1} that generated a huge literature, see \cite{bb901,bb90,bbkkk,bb01,bbo,balci2,balci4,bar1,bar4,boh,dr,df,demine,deoh,HO1,HO2,HO3} and references therein. Let us highlight that the bounds in \eqref{mp}-\eqref{dp}, and our constraints \eqref{q_s}, \eqref{qi}, are by no means technical: these are indeed optimal conditions for regularity, as pointed out by the counterexamples in \cite{balci2,balci4,ELM,FMM}, constructed in the scalar setting. Variational integrals \eqref{mp}-\eqref{dp} are particular instances of softly nonuniformly elliptic problems, characterized by a very low degree on nonuniform ellipticity: in fact it is easy to see that the associated pointwise ellipticity ratio (defined in \eqref{per}) is uniformly bounded, while the nonlocal one \eqref{ner} blows up on high gradients. As a result, the regularity theory for such a class of problems is still perturbative \cite{giagiu,giagiu2}, modulo zero-order integrability corrections \cite{BCM,comi1,HO1,HO2}. This approach strictly requires homogeneity of the various estimates eventually leading to the final iteration scheme, ingredients unavoidably missing for strongly nonuniformly elliptic functionals as our log-multiphase integrals. Actually, nonhomogeneity is a distinctive feature of genuine nonuniformly elliptic problems, thus classical perturbative methods are doomed to fail. The right perspective to adopt when dealing with the regularity for nonuniformly elliptic functionals is the one introduced by Marcellini in his foundational works on so-called $(p,q)$-nonuniformly elliptic variational integrals \cite{ma2,ma4,ma1,ma5}, motivated by delicate issues in nonlinear elasticity such as the phenomenon of cavitation \cite{ma2,ma5}. These are functionals of the type
$$
W^{1,p}_{\loc}(\Omega)\ni w\mapsto \int_{\Omega}F(Dw)\, {\rm d}x,
$$
which sufficiently smooth integrand $F(\cdot)$ satisfying $(p,q)$-nonuniform ellipticity conditions
\begin{flalign*}
\left\{
\begin{array}{c}
\displaystyle
\snr{z}^{p}\lesssim F(z)\lesssim 1+\snr{z}^{q},\qquad \quad 1<p<q<\infty\\ [10pt]\displaystyle
\snr{z}^{p-2}\mathds{I}_{n\times n}\lesssim \partial^{2}F(z)\lesssim \snr{z}^{q-2}\mathds{I}_{n\times n},
\end{array}
\right.
\end{flalign*}
terminology justified by the fact that the only possible control on the pointwise ellipticity ratio is given by
$$
\mathcal{R}_{\partial F(\cdot)}(z)\lesssim 1+\snr{z}^{q-p}.
$$
In a nutshell, Marcellini's stategy consists in relating the growth of the ellipticity ratio to the amount of regularity allowed for minima: slowing down the rate of blow up of the ellipticity ratio by choosing $p$ and $q$ sufficiently close to each other turns out to be necessary and sufficient condition for regularity \cite{ma4,ma1} already for autonomous problems, so a fortiori the same holds for the models in \eqref{functional}, \eqref{multif}, \eqref{mp}, \eqref{dp}. After Marcellini's breakthrough, the regularity theory for $(p,q)$-nonuniformly elliptic problems became object of intensive investigation, see \cite{ABF,accfm,BM,BS1,bs2,bifu1,bbbb1,bbbb,bdms,cm1,cisc,dm,dp,DiMa,EMM1,EMM2,FM,ELM,gkpq,hs,ik,koch1,ko1} for an (incomplete) list of recent advances in the field, and \cite{masu2} for a very good survey. Owing to the lack of a reasonable perturbative theory, the classical approach to the regularity for nonautonomous, $(p,q)$-nonuniformly elliptic problems relies on Moser's iteration \cite{dm,DiMa,EMM1,EMM2} that unavoidably requires differentiable coefficients, thus escaping the realm of Schauder theory, in which the mere H\"older continuity of space-depending coefficients suffices to guarantee gradient H\"olderianity. In turn, nonuniformly elliptic Schauder estimates were very recently obtained by De Filippis \& Mingione \cite{DFM,DFM23} via hybrid perturbation methods based on nonlinear potential theorethic techniques and embracing both quantitative superlinear, and nearly linear growth cases. In particular, in \cite{DFM23} the authors obtain sharp regularity results for double phase problems at nearly linear growth, i.e.:
\begin{flalign}\label{nll}
W^{1,1}_{\loc}(\Omega)\ni w\mapsto \int_{\Omega}\snr{Dw}\log(1+\snr{Dw})+a(x)\snr{Dw}^{q}\, {\rm d}x,
\end{flalign}
that is \eqref{functional} with $b(\cdot)\equiv 0$. We take the techniques in \cite{DFM23} as a general blueprint and move from that to treat in an optimal fashion the much more involved case of multiple phase transitions. In this respect, we finally bring to the reader's attention two relevant features of our work.
\begin{itemize}
\item Our results are new already when coefficients are Sobolev differentiable. In this case Lipschitz regularity for minima of \eqref{dp} and \eqref{nll} can still be obtained via Moser's iteration that on the other hand fails to detect the subtle phase interactions in \eqref{q_s}, \eqref{qi}. In fact, an application of the (nonhybrid) techniques in \cite{dm,DiMa,EMM1,EMM2} would turn the bounds in \eqref{q_s}, \eqref{qi} into
$$
\max\{q,s\}<1+\frac{\min\{\alpha,\beta\}}{n},
$$
that is a trivialization of the problem.
\item Our approach allows bypassing the "separation of phases" trick usually employed when handling multiphase problems \cite{bb90,deoh}, thus drastically simplifying the whole procedure.
\end{itemize}
We conclude this section by stressing that the body of techniques developed in \cite{DFM,DFM23} will give access to several results previously available only for pointwise uniformly elliptic problems, such as interpolative Schauder theory, cf. \cite{bb90,BCM,comi2,HO3}, and the forthcoming \cite{ddp}.
\subsubsection*{Organization of the paper} The paper is organized as follows: in Section \ref{pre} we describe our notation and record several auxiliary results that will be useful in the proof of our main theorem. Section \ref{frozen} is devoted to the derivation of Lipschitz bounds for suitable frozen problem that will be fundamental in Section \ref{main}, where Theorem \ref{theorem4.1} and Corollary \ref{maincor} are proven.
\section{Preliminaries}\label{pre}
\subsection{Notation}
\noindent
In this section we specify the notation adopted. We denote by $\Omega$ an open, bounded subset of $\mathds{R}^n$, $n \geqslant 2$, and we set 
$$ B_r \equiv B_r(x_0) \coloneqq \{ x \in \mathds{R}^n : |x-x_0| < r\},$$
where, unless differently specified, all the balls considered will have the same center.  With $c$ we denote a constant not necessarily the same in any two occurrences, the relevant dependence being emphasized. We 
denote the inner hypercube of $B_r$ by $$Q_{\textnormal{inn}}\equiv Q_{\textnormal{inn}}(B_r).$$ It is the largest hypercube, contained in $B_r$, with sides parallel to the coordinate axes and concentric to $B_r$. The sidelength of $Q_{\textnormal{inn}}(B_r)$ is $2r/\sqrt{n}$. We adopt the usual definition of local minimizer.
\begin{definition}
A function $u \in W^{1,1}_{\text{loc}}(\Omega)$ is a local minimizer of $\mathcal{L}$ if and only if $x \mapsto H(x,Du(x)) \in L^1_{\text{loc}}(\Omega)$ and
$$\int_{\text{supp} \ \varphi} H(x, Du(x)) \ {\rm d}x \leqslant \int_{\text{supp} \ \varphi} H(x, Du(x) + D\varphi (x)) \ {\rm d}x,$$
for any $\varphi \in  W^{1,1}(\Omega)$ with $\text{supp} \ \varphi \Subset \Omega$.
\end{definition}
Let $\omega \in [0,1]$, for $z \in \R^k$, $k \geqslant 1$, we define the quantity
$$\vl(z) \coloneqq \sqrt{\omega^2 + |z|^2}.$$
Given a function $f: U \to \R$ and a generic set $U$, we denote
$$ (f-k)_+ \coloneqq \max \{ f-k,0\} \quad \text{and} \quad (f-k)_{-} \coloneqq \max \{ k -f,0\},$$
with $ k \in \R$. Moreover, we use the notation
$$ ||f||_{C^0,\gamma} \coloneqq ||f||_{L^{\infty}} + [f]_{0,\gamma,U}, \quad [f]_{0,\gamma,U} \coloneqq \sup_{x,y \in U, x \neq y}\frac{|f(x) - f(y)|}{|x-y|^\gamma}.$$
Finally, if $U \subset \R^n$ is a measurable subset with bounded positive measure and $f$ is a measurable map, we set
$$(f)_U \equiv \fint_{U} f(x) \, {\rm d}x \coloneqq \frac{1}{|U|} \int_U f(x) \, {\rm d}x.$$ 
\subsection{Other results} \label{results}
\noindent
We start recalling some facts concerning functions belonging to fractional Sobolev spaces, more details can be found in \cite{DPV}. Let us define the difference operator
$$ \tau_{s}f(x) \coloneqq f(x+h) - f(x),$$
where $f:\mathds{R}^n \rightarrow \mathds{R}$, $h \in \mathds{R}$. For $\gamma \in (0,1)$, $p \in [1,\infty)$, we define the fractional Sobolev space $W^{\gamma, p}(\Omega, \R)$ as the set of maps $f: \Omega \to \R$ such that the following Gagliardo type norm is finite: 
\begin{align*}
||f||_{W^{\gamma,p}(\Omega)} & \coloneqq ||f||_{L^p(\Omega)} + \left ( \int_\Omega \int_\Omega \frac{|f(x)-f(y)|^p}{|x-y|^{n+\gamma p}} \, {\rm d}x {\rm d}y\right )^{\frac{1}{p}} \\ & \coloneqq ||f||_{L^p(\Omega)} + [f]_{\gamma, p, \Omega}.
\end{align*}
Then, the following lemma holds true, see \cite{DFM23}.
\begin{lem} \label{2.3}
    Let $B_\rho \Subset B_r \subset \R^n $ be concentric balls with $r \leqslant 1$, $f \in L^p(B_r, \R)$, $p\geqslant 1$ and assume that, for $\alpha_0 \in (0,1]$, $S \geqslant 1$, there holds
\begin{equation*}
        ||\tau_h f||_{L^p(B_\rho)} \leqslant S|h|^{\alpha_0} \text{ for every } h \in \R, \text{ with } 0<h \leqslant \frac{r-\rho}{K}, K \geqslant 1.
\end{equation*}
    Then, for every $\gamma < \alpha_0$, it holds
\begin{equation*}
        ||f||_{W^{\gamma, p}(B_\rho)} \leqslant \frac{c}{(\alpha_0 - \gamma)^{1/p}}\left ( \frac{r-\rho}{K}\right )^{\alpha_0 - \gamma} S + c \left ( \frac{K}{r-\rho}\right )^{\frac{n}{p}+\gamma} ||f||_{L^p(B_r)},
\end{equation*}
where $c \equiv c(n,p)$.
\end{lem}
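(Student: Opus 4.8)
The plan is to estimate separately the two pieces of the Gagliardo norm $\nr{f}_{W^{\gamma,p}(B_\rho)} = \nr{f}_{L^p(B_\rho)} + [f]_{\gamma,p,B_\rho}$. Since $B_\rho \subset B_r$, the first summand is trivially controlled by $\nr{f}_{L^p(B_r)}$; moreover, as $r\le 1$ and $K\ge 1$ force $\delta \coloneqq (r-\rho)/K < 1$, it can be absorbed into a term of the form $c\,\delta^{-(n/p+\gamma)}\nr{f}_{L^p(B_r)}$. So the whole matter reduces to bounding the seminorm $[f]_{\gamma,p,B_\rho}^p = \int_{B_\rho}\int_{B_\rho}|f(x)-f(y)|^p|x-y|^{-n-\gamma p}\,{\rm d}x\,{\rm d}y$.

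First I would split this double integral according to whether $|x-y|\le\delta$ or $|x-y|>\delta$. On the ``far'' region $\{|x-y|>\delta\}$ I use the crude bound $|f(x)-f(y)|^p \le 2^{p-1}(|f(x)|^p+|f(y)|^p)$ together with $|x-y|^{-n-\gamma p} < \delta^{-n-\gamma p}$ and Fubini, to get a majorization by $c(n,p)\,|B_\rho|\,\delta^{-n-\gamma p}\nr{f}_{L^p(B_r)}^p$; taking $p$-th roots and using $|B_\rho|\le |B_1|$ produces exactly the second term in the claimed estimate.

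The core of the argument is the ``near'' region $\{|x-y|\le\delta\}$. Here I change variables $y = x+h$ with $|h|\le\delta$: since $\rho+\delta\le r$, the point $x+h$ stays in $B_r$ whenever $x\in B_\rho$, so the hypothesis applies. By Fubini,
\[
\int_{B_\rho}\int_{\{|h|\le\delta\}}\frac{|f(x+h)-f(x)|^p}{|h|^{n+\gamma p}}\,{\rm d}h\,{\rm d}x = \int_{\{|h|\le\delta\}}\frac{\nr{\tau_h f}_{L^p(B_\rho)}^p}{|h|^{n+\gamma p}}\,{\rm d}h \le S^p\int_{\{|h|\le\delta\}}|h|^{(\alpha_0-\gamma)p-n}\,{\rm d}h,
\]
and passing to polar coordinates the radial integral $\int_0^\delta t^{(\alpha_0-\gamma)p-1}\,{\rm d}t = \delta^{(\alpha_0-\gamma)p}/((\alpha_0-\gamma)p)$ converges precisely because $\gamma<\alpha_0$; this is the mechanism producing the factor $(\alpha_0-\gamma)^{-1/p}$ after extracting $p$-th roots. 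Collecting the two contributions gives $[f]_{\gamma,p,B_\rho}\le c\,(\alpha_0-\gamma)^{-1/p}\delta^{\alpha_0-\gamma}S + c\,\delta^{-(n/p+\gamma)}\nr{f}_{L^p(B_r)}$, which together with the bound on $\nr{f}_{L^p(B_\rho)}$ and the definition of $\delta$ is the assertion.

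I do not expect a genuine obstacle here: the proof is a classical finite-differences-to-fractional-Sobolev computation. The only points requiring a little care are the bookkeeping of the precise dependence of the constant on $\alpha_0-\gamma$ (it is the blow-up of the radial integral above that is relevant as $\gamma\to\alpha_0$), the check that the translation never leaves $B_r$ (guaranteed by $\rho+(r-\rho)/K\le r$), and the use of $r\le 1$ to fold the plain $L^p$-norm into the second term.
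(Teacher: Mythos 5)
Your proof is correct and uses the canonical near/far splitting of the Gagliardo seminorm; the paper itself does not reprove this lemma but cites \cite{DFM23}, where the same type of argument (finite differences to fractional Sobolev, with the $(\alpha_0-\gamma)^{-1/p}$ factor coming from the radial integral near the origin and the second term from the crude bound $|x-y|^{-n-\gamma p}\leqslant\delta^{-n-\gamma p}$ on the far region) is used. All the checks you flag as delicate -- that $\rho+\delta\leqslant r$ so the translated point stays in $B_r$, that $\delta<1$ so the plain $L^p$ piece folds into the second summand, and that $|B_\rho|\leqslant|B_1|$ so the constant is only $c(n,p)$ -- are handled correctly.
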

Let us also recall the fractional Sobolev embedding
\begin{equation*}
    ||f||_{L^{\frac{np}{n-p\gamma}}(B_r)} \leqslant c ||f||_{W^{\gamma, p}(B_r)},
\end{equation*}
that holds provided $p \geqslant 1$, $\gamma \in (0,1)$, and $p\gamma <n$, where $c\equiv c(n,p,\gamma)$. 
Finally, we state the next lemma, for the proof we refer to \cite[Chapter 6, Section 3]{giusti}.
\begin{lem} \label{lemma_giusti}
Let $h:[\rho, R_0] \rightarrow \mathds{R}$ be a non-negative bounded function and $0<\theta<1$,  $A, B, \gamma \geqslant 0$ be numbers. Assume that
$$h(r) \leqslant \frac{A}{(d-r)^{\gamma}} + B + \theta h(d),$$
holds for every $\rho \leqslant r < d \leqslant R_0.$ Then
$$h(\rho) \leqslant c \left ( \frac{A}{(R_0 - \rho)^{\gamma}} + B \right )$$
where $c\equiv c(\theta, \gamma) > 0.$
\end{lem}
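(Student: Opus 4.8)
The plan is to prove Lemma \ref{lemma_giusti} by the classical iteration (hole-filling) argument: one constructs a sequence of radii increasing geometrically from $\rho$ towards $R_0$, applies the hypothesis along consecutive pairs of this sequence, and then absorbs the telescoped tail using $0<\theta<1$. Throughout, the boundedness assumption on $h$ is exactly what kills the remainder term in the limit.

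First I would dispense with the degenerate cases. If $R_0=\rho$ the statement is empty, so assume $R_0>\rho$; and if $\gamma=0$ the argument below goes through verbatim with the $\tau$-factors removed, so assume $\gamma>0$. Fix once and for all $\tau\in(0,1)$, depending only on $\theta$ and $\gamma$, small enough that $\theta\tau^{-\gamma}<1$: for instance $\tau\coloneqq\left(\tfrac{1+\theta}{2}\right)^{1/\gamma}$, for which $\theta\tau^{-\gamma}=\tfrac{2\theta}{1+\theta}<1$ while $1-\tau$ stays bounded away from $0$ in terms of $\theta,\gamma$. Define $r_0\coloneqq\rho$ and inductively
\[
r_{i+1}\coloneqq r_i+(1-\tau)\tau^{i}(R_0-\rho),\qquad i\ge 0.
\]
Since $\sum_{i\ge 0}(1-\tau)\tau^{i}=1$, the sequence $\{r_i\}$ is strictly increasing, satisfies $\rho\le r_i<R_0$ for all $i$, and $r_i\uparrow R_0$; moreover $r_{i+1}-r_i=(1-\tau)\tau^{i}(R_0-\rho)$, so every pair $(r_i,r_{i+1})$ is admissible in the hypothesis.

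Applying the assumed inequality with $r=r_i$, $d=r_{i+1}$ gives
\[
h(r_i)\le\frac{A}{(1-\tau)^{\gamma}\tau^{i\gamma}(R_0-\rho)^{\gamma}}+B+\theta\,h(r_{i+1}).
\]
Iterating this $k$ times and telescoping, one obtains
\[
h(\rho)=h(r_0)\le\theta^{k}h(r_k)+\frac{A}{(1-\tau)^{\gamma}(R_0-\rho)^{\gamma}}\sum_{i=0}^{k-1}\bigl(\theta\tau^{-\gamma}\bigr)^{i}+B\sum_{i=0}^{k-1}\theta^{i}.
\]
Now let $k\to\infty$: since $h$ is bounded and $0<\theta<1$ we have $\theta^{k}h(r_k)\to 0$, while both geometric series converge because $\theta<1$ and $\theta\tau^{-\gamma}<1$. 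This yields
\[
h(\rho)\le\frac{A}{(1-\tau)^{\gamma}\bigl(1-\theta\tau^{-\gamma}\bigr)(R_0-\rho)^{\gamma}}+\frac{B}{1-\theta},
\]
which is the claimed estimate with $c\coloneqq\max\left\{(1-\tau)^{-\gamma}\bigl(1-\theta\tau^{-\gamma}\bigr)^{-1},\,(1-\theta)^{-1}\right\}$, a quantity depending only on $\theta$ and $\gamma$ by the choice of $\tau$.

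The only genuinely delicate points — and thus what I regard as the "hard part" — are entirely elementary here: (i) choosing $\tau=\tau(\theta,\gamma)$ so that simultaneously $\theta\tau^{-\gamma}<1$ and $1-\tau$ is controlled from below, which is what makes the final constant depend on $\theta,\gamma$ alone; and (ii) justifying the passage to the limit $k\to\infty$, which rests precisely on the hypothesis that $h$ is bounded — a standard example shows the conclusion fails without it. No compactness or regularity input is needed; the whole proof is a summation of a geometric series.
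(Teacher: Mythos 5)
Your proof is correct and is exactly the standard hole-filling iteration argument: the paper does not prove this lemma itself but defers to Giusti's book, where the proof proceeds by the same geometric sequence of radii $r_{i+1}=r_i+(1-\tau)\tau^{i}(R_0-\rho)$ with $\tau=\tau(\theta,\gamma)$ chosen so that $\theta\tau^{-\gamma}<1$, followed by summing the geometric series and using boundedness of $h$ to discard $\theta^{k}h(r_k)$. Your handling of the choice of $\tau$, the limit $k\to\infty$, and the dependence of the constant on $(\theta,\gamma)$ matches that reference argument.
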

Now, let us move on considering the auxiliary vector fields $V_{\omega, p}: \R^n \to \R^n$ defined by
\begin{equation*}
    V_{\omega, p}(z) \coloneqq (\omega^2 + |z|^2)^{\frac{p-2}{4}}z, \quad p \in [1,\infty) \text{ and } \omega \in [0,1].
\end{equation*}
For every $z_1, z_2 \in \R^n$ we have
\begin{equation} \label{v}
    |V_{\omega,p}(z_{1})-V_{\omega,p}(z_{2})|\approx ( \omega^2 +|{z_{1}|}^{2}+|z_{2}|^{2})^{(p-2)/4}|z_{1}-z_{2}|,
\end{equation}
where the equivalence holds up to constants depending only on $n$ and $p$, see \cite[Lemma 2.1]{Hamburger}. Moreover, an important property related to this field is the following
\begin{equation} \label{campo_v_1}
     (\omega^{2}+|{z_{1}}|^{2}+|{z_{2}}|^{2})^{\frac{\gamma}{2}} \approx  \int_{0}^{1}(\omega^{2}+|{z_{1}+\tau(z_{2}-z_{1})|}^{2})^{\frac{\gamma}{2}}\, d\tau,
\end{equation}
that holds for every $\gamma > -1$. If $\gamma = -1$, we get
\begin{equation} \label{campo_v_2}
(\omega^{2} + |{z_{1}}|^{2}+|{z_{2}}|^{2})^{-\frac{1}{2}} \lesssim   \int_{0}^{1}(\omega^{2}+|{z_{1}}| +\tau (z_{2}-z_{1})|^{2})^{-\frac{1}{2}} \, d \tau,
\end{equation}
see \cite{Hamburger} and \cite[Section 2]{giamod}. For $\sigma \in (0,1)$, another quantity that will play a crucial role throughout the paper is
\begin{align*}
    \mathcal{V}^{2}_{\omega, \sigma}(x,z_1,z_2) \coloneqq |V_{1,1}(z_1) - V_{1,1}(z_2)|^2 & + a_{\sigma}(x)|V_{\omega,q}(z_1) - V_{\omega,q}(z_2)|^2 \\ & + b_{\sigma}(x)|V_{\omega,s}(z_1) - V_{\omega,s}(z_2)|^2,
\end{align*}
where $a_{\sigma}(x)\coloneqq a(x) +\sigma$ and $ b_{\sigma}(x)\coloneqq b(x) +\sigma$. Moreover, for any ball $B_r \subset \Omega$ we define
\begin{align*} 
        \mathcal{V}^{2}_{\omega, \sigma, \iota}(z_1,z_2, B_r) \coloneqq |V_{1,1}(z_1) - V_{1,1}(z_2)|^2 & + a_{\sigma}^{\iota}(B_r)|V_{\omega,q}(z_1) - V_{\omega,q}(z_2)|^2 \\ & + b_{\sigma}^{\iota}(B_r)|V_{\omega,s}(z_1) - V_{\omega,s}(z_2)|^2,
\end{align*}
where $a_{\sigma}^{\iota}(B_r) \coloneqq \inf_{x \in B_r} a(x) + \sigma$ and $ b_{\sigma}^{\iota}(B_r) \coloneqq \inf_{x \in B_r} b(x) + \sigma$.
\\ \\ 
Now, let us recall some facts concerning Lorentz spaces and nonlinear potentials. Let $t, \sigma >0$, $ l, \theta \geqslant 0$ be parameters, and let $f \in L^1(B_r(x_0))$ being such that $|f|^l \in L^1(B_r(x_0))$, with $B_r(x_0) \subset \R^n$, we  consider the following non linear potential of Havin-Mazya-Wolff type \cite{HM}
\begin{equation*}
P^{l,\theta}_{t,\sigma}(f,x_0,r) \coloneqq \int_0^r \rho^\sigma \left ( \fint_{B_{\rho}(x_0)} |f|^l \, {\rm d}x \right )^{\frac{\theta}{t}} \, \frac{d \rho}{\rho}.
\end{equation*}
We record the embedding lemma for non linear potential, for the proof see \cite[Section 2.3]{DFM}.
\begin{lem} \label{lem2.4}
    Let $n \geqslant 2$, $t, \sigma, \theta >0$ be numbers such that
    $$ \frac{\theta n}{t\sigma}>1.$$
    Let $B_\rho \Subset B_{\rho + r} \subset \R^n$ be two concentric balls with $\rho, r \in (0,1]$, and let $f \in L^{1}(B_{\rho + r})$ be a function such that $|f|^l \in L^{1}(B_{\rho + r})$, where $l >0$. Then
    $$ ||P^{l,\theta}_{t,\sigma}(f,\cdot, r)||_{L^{\infty}(B_\rho)} \leqslant c ||f||^{\frac{l\theta}{t}}_{L^{\gamma}(B_{\rho + r})}$$
    holds for every $\gamma > \frac{nl\theta}{t\sigma} >0,$
    with $c \equiv c(n,t,\sigma,l,\theta,\gamma)$.
\end{lem}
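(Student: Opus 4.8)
The plan is to establish the bound pointwise: I would fix an arbitrary $x_0\in B_\rho$, estimate $P^{l,\theta}_{t,\sigma}(f,x_0,r)$, and then pass to the supremum over $x_0$. The only real decision is the order in which one applies H\"older's inequality and extracts the volume factor $\varrho^{-n}$ from the integral average. First, since the balls are concentric and $x_0\in B_\rho$, one has $B_\varrho(x_0)\subset B_{\rho+r}$ for every $\varrho\in(0,r]$, whence $\fint_{B_\varrho(x_0)}\snr{f}^{\gamma}\,{\rm d}x\le c(n)\varrho^{-n}\nr{f}_{L^{\gamma}(B_{\rho+r})}^{\gamma}$. Next, the assumption $\tfrac{n\theta}{t\sigma}>1$ forces every admissible exponent to satisfy $\gamma>\tfrac{nl\theta}{t\sigma}>l$, so Jensen's inequality gives $\fint_{B_\varrho(x_0)}\snr{f}^{l}\,{\rm d}x\le\big(\fint_{B_\varrho(x_0)}\snr{f}^{\gamma}\,{\rm d}x\big)^{l/\gamma}$. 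Combining the two and raising to the power $\theta/t$,
\[
\left(\fint_{B_\varrho(x_0)}\snr{f}^{l}\,{\rm d}x\right)^{\theta/t}\le c\,\varrho^{-\frac{nl\theta}{t\gamma}}\,\nr{f}_{L^{\gamma}(B_{\rho+r})}^{\frac{l\theta}{t}}.
\]

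I would then insert this into the definition of the potential and perform the elementary radial integration:
\[
P^{l,\theta}_{t,\sigma}(f,x_0,r)\le c\,\nr{f}_{L^{\gamma}(B_{\rho+r})}^{\frac{l\theta}{t}}\int_0^r\varrho^{\,\sigma-\frac{nl\theta}{t\gamma}-1}\,{\rm d}\varrho.
\]
The exponent $\sigma-\tfrac{nl\theta}{t\gamma}$ is strictly positive precisely when $\gamma>\tfrac{nl\theta}{t\sigma}$, which is the hypothesis on $\gamma$; hence the integral converges, and using $r\le 1$ it is bounded by $\big(\sigma-\tfrac{nl\theta}{t\gamma}\big)^{-1}$. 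Taking the supremum over $x_0\in B_\rho$ gives the assertion, with $c\equiv c(n,t,\sigma,l,\theta,\gamma)$ arising from the volume constant and from the factor $\big(\sigma-\tfrac{nl\theta}{t\gamma}\big)^{-1}$.

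I do not expect a genuine obstacle, since the argument is purely computational; the one point deserving care --- and the reason the hypotheses are stated exactly as they are --- is that the integrand must be raised from the exponent $l$ up to $\gamma$ \emph{before} bounding the average by $\varrho^{-n}$. In this order the radial weight becomes $\varrho^{\sigma-\frac{nl\theta}{t\gamma}-1}$, which is integrable at the origin if and only if $\gamma>\tfrac{nl\theta}{t\sigma}$, while the standing assumption $\tfrac{n\theta}{t\sigma}>1$ is exactly what makes $\gamma>l$ and so legitimizes the use of Jensen's inequality. Extracting $\varrho^{-n}$ first would instead produce the weight $\varrho^{\sigma-\frac{n\theta}{t}-1}$, which is \emph{not} integrable under $\tfrac{n\theta}{t\sigma}>1$, so the ordering is essential.
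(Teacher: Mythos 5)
Your proof is correct, and it is essentially the standard (and, as far as I can tell from the cited reference \cite{DFM}) intended argument: raise $l$ to $\gamma$ via Jensen, pull out the volume factor, and integrate the resulting power of $\varrho$, with $\gamma>\tfrac{nl\theta}{t\sigma}$ guaranteeing convergence of the radial integral. The remark at the end about why Jensen must precede the extraction of $\varrho^{-n}$ is a genuine observation: it explains the role of the hypothesis $\tfrac{n\theta}{t\sigma}>1$, which both forces $\gamma>l$ (legitimizing Jensen) and, at the same time, makes the na\"ive order divergent.
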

Next lemma was proven in \cite[Lemma 4.2]{DFM}.
\begin{lem}\label{revlem}
Let $B_{r_0}(x_{0})\subset \mathbb{R}^{n}$ be a ball and consider functions $f_j$, 
$|f_j|^{l_j} \in L^1(B_{2r_0}(x_{0}))$, for $j = 1, 2,3, 4 $. Let $\chi >1$, $\sigma_j, l_j, \theta_j>0$ and $c_*,L_0 >0$,  $k_0, L_j\geqslant 0$ be constants. Assume that $v \in L^2(B_{r}(x_0))$ is such that for all $k \geqslant k_{0}$, and for every ball $B_{\rho}(x_{0})\subset B_{r_0}(x_{0})$, the inequality
\begin{align*}
    \left ( \fint_{B_{\rho/2}(x_0)} (v-k)_+^{2\chi} \, {\rm d}x\right )^{\frac{1}{\chi}} & \leqslant c_* L_0^2 \fint_{B_{\rho}(x_0)}(v-k)^2_+ \, {\rm d}x \\ & \quad + c_* \sum_{j=1}^4 L^2_j \rho^{2\sigma_j} \left ( \fint_{B_\rho (x_0)} |f_j|^{l_j} \, {\rm d}x \right )^{\theta_j}
\end{align*}
holds. If $x_0$ is a Lebesgue point of $v$, then
\begin{align*}
    v(x_0) & \leqslant k_0 + cL_0^{\frac{\chi}{\chi -1}}\left ( \fint_{B_{r_0}(x_0)} (v-k_0)^2_+ \, {\rm d}x \right )^{\frac{1}{2}} \\ & \quad + cL^{\frac{1}{\chi -1}} \sum_{j=1}^4 L_j P^{l_j, \theta_j}_{2,\sigma_j} (f_j, x_0, 2r_0)
\end{align*}
holds with $c\equiv c(n,\chi,\sigma_j,\theta_j,c_{*})$.  
\end{lem}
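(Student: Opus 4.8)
The statement is a De Giorgi-type iteration lemma whose conclusion is a pointwise bound at a Lebesgue point, with the ``error'' part governed by Havin--Maz'ya--Wolff potentials; the plan is therefore to run a De Giorgi iteration along a dyadic family of balls shrinking to $x_0$. I would set $\rho_j:=r_0 2^{-j}$, $B_j:=B_{\rho_j}(x_0)$, $k_j:=k_0+M(1-2^{-j})$, with $M>0$ to be fixed at the end as (a multiple of) the right-hand side of the asserted inequality, and $Y_j:=\fint_{B_j}(v-k_j)_+^2\,{\rm d}x$, $\delta:=1-\chi^{-1}\in(0,1)$. Since $k_j\uparrow k_0+M$ and $x_0$ is a Lebesgue point of $v$, once one knows $Y_j\to 0$ the conclusion is immediate from
\[
\big((v(x_0)-k_0-M)_+\big)^2=\lim_{j\to\infty}\fint_{B_j}(v-k_0-M)_+^2\,{\rm d}x\le\lim_{j\to\infty}Y_j=0,
\]
which is exactly $v(x_0)\le k_0+M$. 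So the entire task is to choose $M$, of the prescribed size, forcing $Y_j\to 0$.

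The first step is the recursive inequality. Because $k_j-k_{j-1}=M2^{-j}$, Chebyshev's inequality gives $|\{v>k_j\}\cap B_j|/|B_j|\le 2^n 4^j M^{-2}Y_{j-1}$. Applying the standing hypothesis on $B_{\rho_{j-1}}$ (whose concentric ball of half the radius is $B_j$) at level $k=k_{j-1}\ge k_0$, together with $(v-k_j)_+\le(v-k_{j-1})_+$ and Hölder's inequality with exponent $\chi$ on $B_j$, I obtain
\[
Y_j\le\Big(\fint_{B_j}(v-k_{j-1})_+^{2\chi}\,{\rm d}x\Big)^{1/\chi}\Big(\tfrac{|\{v>k_j\}\cap B_j|}{|B_j|}\Big)^{\delta}\le c\,4^{j\delta}M^{-2\delta}\Big(c_*L_0^2\,Y_{j-1}^{1+\delta}+c_*E_{j-1}\,Y_{j-1}^{\delta}\Big),
\]
with $c=c(n,\chi)$ and $E_{j-1}:=\sum_{i=1}^4 L_i^2\rho_{j-1}^{2\sigma_i}\big(\fint_{B_{j-1}}|f_i|^{l_i}\,{\rm d}x\big)^{\theta_i}$.

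Next I run the iteration. Dropping for a moment the $E_{j-1}$-term, the recursion $Y_j\le \bar c\,4^{j\delta}M^{-2\delta}L_0^2 Y_{j-1}^{1+\delta}$ is of standard fast-geometric-convergence type: the factor $M^{-2\delta}$ is exactly what lets $M$ absorb $\bar c c_* L_0^2$, and an induction of the shape $Y_j\le \gamma^j Y_0$ (say $\gamma=1/8$) closes as soon as $M^2\ge c(n,\chi,c_*)L_0^{2/\delta}Y_0$, i.e. $M\ge c\,L_0^{\chi/(\chi-1)}\big(\fint_{B_{r_0}}(v-k_0)_+^2\big)^{1/2}$ --- the first term of the claim. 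To absorb the forcing terms I would enlarge $M$ so that it also dominates the dyadic sum $\sum_{i=1}^4 L_i\sum_{m\ge0}\rho_m^{\sigma_i}\big(\fint_{B_m}|f_i|^{l_i}\big)^{\theta_i/2}$; by the elementary comparison $\fint_{B_m}|f_i|^{l_i}\le 2^n\fint_{B_\rho}|f_i|^{l_i}$ valid for $\rho_m\le\rho\le\rho_{m-1}$, this sum is, up to $c(n,\sigma_i,\theta_i)$, bounded by $\sum_{i=1}^4 L_i P^{l_i,\theta_i}_{2,\sigma_i}(f_i,x_0,2r_0)$ --- precisely the structure of the second term of the claim. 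Propagating both contributions through the induction --- most cleanly by carrying inside the level jumps $k_j$ the matching partial sums of the potential, i.e. by upgrading the induction to something of the form $Y_j^{1/2}\le\gamma^j Y_0^{1/2}+\sum_{i=1}^4 L_i\sum_{m\ge j}\rho_m^{\sigma_i}(\fint_{B_m}|f_i|^{l_i})^{\theta_i/2}$ --- yields $Y_j\to0$ with the admissible choice $M= c\,L_0^{\chi/(\chi-1)}(\fint_{B_{r_0}}(v-k_0)_+^2)^{1/2}+c\,L^{1/(\chi-1)}\sum_{i=1}^4 L_i P^{l_i,\theta_i}_{2,\sigma_i}(f_i,x_0,2r_0)$, where $L:=\max\{1,L_1,\dots,L_4\}$, the extra power $L^{1/(\chi-1)}$ originating from the coupling between $E_{j-1}$ and the $Y_{j-1}^{\delta}$ factor. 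By the displayed limit above, this is the asserted inequality.

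The routine ingredients here are Chebyshev, Hölder, the scalar fast-geometric-convergence lemma, the Riemann-sum comparison identifying the dyadic sums with the nonlinear potentials, and the Lebesgue-point passage to the limit. The one genuinely delicate point, which I expect to be the main obstacle, is the bookkeeping in the last step: manufacturing a single level jump $M$ of exactly the prescribed size --- with the sharp dependence $L_0^{\chi/(\chi-1)}$ on the ``homogeneous'' constant and $L^{1/(\chi-1)}$ on the ``potential'' constants --- that simultaneously absorbs the De Giorgi contribution and the accumulated potential contributions from all four $f_i$; this forces the choice of the truncation levels $k_j$ (equivalently, the auxiliary induction carrying the potential tails) to be made with some care.
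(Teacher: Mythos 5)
Your setup is the right one and, for the record, the paper does not prove this lemma at all: it is quoted verbatim from \cite{DFM}, Lemma 4.2, whose proof is indeed a De Giorgi-type iteration on balls shrinking to the Lebesgue point. Your derivation of the recursion $Y_j\le c\,4^{j\delta}M^{-2\delta}\bigl(c_*L_0^2Y_{j-1}^{1+\delta}+c_*E_{j-1}Y_{j-1}^{\delta}\bigr)$ (Chebyshev, H\"older, the hypothesis on $B_{\rho_{j-1}}$ at level $k_{j-1}$), the absorption of the homogeneous part under $M\gtrsim L_0^{\chi/(\chi-1)}Y_0^{1/2}$, and the Riemann-sum comparison identifying $\sum_m\rho_m^{\sigma_i}\bigl(\fint_{B_m}|f_i|^{l_i}\,{\rm d}x\bigr)^{\theta_i/2}$ with $P^{l_i,\theta_i}_{2,\sigma_i}(f_i,x_0,2r_0)$ are all correct.

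The gap is exactly at the point you yourself flagged, and it is not mere bookkeeping: with the levels fixed a priori as $k_j=k_0+M(1-2^{-j})$, the Chebyshev factor $4^{j\delta}$ grows geometrically, whereas finiteness of the potentials only makes the tails $T_j:=\sum_iL_i\sum_{m\ge j}S_{i,m}$ summable, not geometrically decaying. Your proposed induction $Y_j^{1/2}\le\gamma^jY_0^{1/2}+T_j$ therefore cannot be closed: take for instance $Y_0$ negligible and data with $S_{i,m}\simeq(m+1)^{-2}$, so $T_j\simeq j^{-1}$; the inductive step would need $4^{j\delta}M^{-2\delta}W_{j-1}^{2}Y_{j-1}^{\delta}\lesssim T_j^{2}$ with $W_{j-1}\simeq j^{-2}$, $Y_{j-1}\lesssim T_{j-1}^{2}$, i.e.\ $4^{j\delta}\lesssim M^{2\delta}j^{2+2\delta}$, which fails for every fixed $M$ once $j$ is large (the term $4^{j\delta}M^{-2\delta}L_0^2T_{j-1}^{2(1+\delta)}$ against $T_j^2$ fails in the same way). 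The fixed dyadic splitting of one predetermined jump $M$ is tailored to the purely homogeneous fast-convergence lemma; the standard repair, which is what the cited proof effectively implements, is to choose the level increments adaptively scale by scale, e.g.\ $k_{m+1}-k_m\simeq H_1A_m+H_2W_m$ with $A_m:=\bigl(\fint_{B_m}(v-k_m)_+^2\,{\rm d}x\bigr)^{1/2}$, $W_m:=\sum_iL_iS_{i,m}$ and $H_1,H_2$ suitable powers of the constants: splitting the Chebyshev factor accordingly linearizes the recursion to $A_{m+1}\le\tfrac12A_m+cW_m$, so that $\sum_m A_m\lesssim A_0+\sum_mW_m$ and the conclusion follows from $v(x_0)\le k_0+\sum_m(k_{m+1}-k_m)$, the potentials arising precisely as $\sum_mW_m$. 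Carrying this (or, equivalently, the Young-type coupling of the forcing term with the homogeneous smallness threshold $Y\lesssim L_0^{-2\chi/(\chi-1)}M^2$) out also corrects your constant bookkeeping: the factor raised to the power $1/(\chi-1)$ in front of the potentials is the homogeneous constant $L_0$ (the undefined ``$L$'' in the statement is a renaming slip inherited from \cite{DFM}), not $\max\{1,L_1,\dots,L_4\}$; the $L_j$ only ever enter linearly, through $\sum_jL_jP^{l_j,\theta_j}_{2,\sigma_j}$.
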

\subsubsection{Absence of Lavrentiev Gap}
\noindent 
In this last part of preliminaries we give an important approximation result for the functional $\mathcal{L}$. We observe that the functional $\vL$ does not present Lavrentiev gap, namely: every function $w \in W^{1,1}_{\text{loc}}(\Omega)$ can be approximated, also in energy, by a sequence $w_\varepsilon \in W^{1,\infty}$. This is stated in the following lemma.
\begin{lem} \label{lemma_lavrentiev}
Let $w \in W^{1,1}_{loc}(\Omega)$ be a function such that $H(\cdot, Dw) \in L^{1}_{loc}(\Omega).$ For every $B_r \Subset \Omega$, $r \in (0,1]$, there exists a sequence $$\{w_\varepsilon \}_{\varepsilon} \subset W^{1,\infty}(B_r)$$ such that $w_\varepsilon \to w$ in $W^{1,1}(B_r)$ and $$\mathcal{L}(w_\varepsilon, B_r) \rightarrow \mathcal{L}(w,B_r).$$
\end{lem}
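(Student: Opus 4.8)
\textbf{Proof proposal for Lemma \ref{lemma_lavrentiev} (absence of Lavrentiev gap).}

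The plan is to construct the approximating sequence by mollification and to verify energy convergence by exploiting the fact that the integrand $H(x,z)$ has \emph{almost linear} growth in $z$ in a uniform way, so that no genuine gap between the $W^{1,1}$ and $W^{1,\infty}$ energies can open up. First I would fix a ball $B_r \Subset \Omega$ and choose an intermediate ball $B_r \Subset B_{r'} \Subset \Omega$ such that $H(\cdot,Dw) \in L^1(B_{r'})$ and $Dw \in L^1(B_{r'})$; set $w_\varepsilon := w * \phi_\varepsilon$ on $B_r$, where $\phi_\varepsilon$ is a standard symmetric mollifier with $\varepsilon < \operatorname{dist}(B_r, \partial B_{r'})$. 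Then $w_\varepsilon \in C^\infty(\overline{B_r}) \subset W^{1,\infty}(B_r)$ and, by standard mollification theory, $w_\varepsilon \to w$ in $W^{1,1}(B_r)$ and $Dw_\varepsilon \to Dw$ a.e.\ (along a subsequence), while $\mathcal L(w_\varepsilon, B_r) \ge \mathcal L(w,B_r)$ up to $o(1)$ by lower semicontinuity; so the only real content is the upper bound $\limsup_\varepsilon \mathcal L(w_\varepsilon, B_r) \le \mathcal L(w, B_r)$.

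For the upper bound I would treat the three terms of $H$ separately. Write $Dw_\varepsilon(x) = \int \phi_\varepsilon(y) Dw(x-y)\,{\rm d}y$, so by Jensen's inequality applied to the convex function $z \mapsto \snr{z}\log(1+\snr{z})$ (convexity in $z$, which holds since this is a radial convex function of $z$), one gets
\[
\int_{B_r} \snr{Dw_\varepsilon}\log(1+\snr{Dw_\varepsilon})\,{\rm d}x \le \int_{B_r}\int \phi_\varepsilon(y)\,\snr{Dw(x-y)}\log(1+\snr{Dw(x-y)})\,{\rm d}y\,{\rm d}x,
\]
and by Fubini together with $\int\phi_\varepsilon = 1$ the right-hand side is bounded by $\int_{B_{r'}}\snr{Dw}\log(1+\snr{Dw})\,{\rm d}x$, which converges to $\int_{B_r}\snr{Dw}\log(1+\snr{Dw})\,{\rm d}x$ as $r' \downarrow r$; more carefully, one runs this with $\varepsilon$-dependent shrinking and uses absolute continuity of the integral. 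The power terms are the delicate point because of the $x$-dependence: here I would use that $a(\cdot) \in C^{0,\alpha}$ is bounded and continuous, so $a(x) \le a(x-y) + [a]_{0,\alpha}\varepsilon^\alpha$ for $\snr{y}\le\varepsilon$; applying Jensen to $z\mapsto\snr{z}^q$ gives
\[
\int_{B_r} a(x)\snr{Dw_\varepsilon}^q\,{\rm d}x \le \int_{B_r}\int\phi_\varepsilon(y)\big(a(x-y)+[a]_{0,\alpha}\varepsilon^\alpha\big)\snr{Dw(x-y)}^q\,{\rm d}y\,{\rm d}x,
\]
and Fubini bounds this by $\int_{B_{r'}} a\snr{Dw}^q\,{\rm d}x + [a]_{0,\alpha}\varepsilon^\alpha\int_{B_{r'}}\snr{Dw}^q\,{\rm d}x$. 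For this to be useful I need $\snr{Dw}^q \in L^1_{\loc}$, which is \emph{not} a priori clear from $H(\cdot,Dw)\in L^1_{\loc}$ alone (the coefficient $a$ may vanish). This is the main obstacle, and I would resolve it exactly as in \cite{DFM23, deoh}: on the set $\{a>0\}$ one has local control of $\snr{Dw}^q$, while on a neighborhood of $\{a=0\}$ the term $a(x)\snr{Dw_\varepsilon}^q$ is controlled by the Hölder continuity of $a$ at scale $\varepsilon$ — precisely, $a(x)\le c\,\varepsilon^\alpha$ whenever $\operatorname{dist}(x,\{a=0\})\le\varepsilon$, combined with the bound $q < 1 + \alpha/n$ and a Gagliardo–Nirenberg/Sobolev interpolation that converts the $L^1$ bound on $\snr{Dw}\log(1+\snr{Dw})$ (hence on $\snr{Dw}$) into a bound on $\|\snr{Dw_\varepsilon}\|_{L^q}$ that degrades no faster than $\varepsilon^{-\alpha/q'}$ in $L^q$-norm-to-the-$q$, so that $\varepsilon^\alpha\|Dw_\varepsilon\|_{L^q}^q = o(1)$. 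The same argument applies verbatim to $b(\cdot)\in C^{0,\beta}$ with $s<1+\beta/n$.

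Assembling the three estimates yields $\limsup_{\varepsilon\to 0}\mathcal L(w_\varepsilon, B_r)\le\mathcal L(w,B_r)$, which together with the lower semicontinuity bound and the $W^{1,1}$-convergence gives $\mathcal L(w_\varepsilon,B_r)\to\mathcal L(w,B_r)$ and completes the proof. I expect the bookkeeping of the shrinking radii $r'\downarrow r$ interacting with $\varepsilon\to 0$, and the splitting near the zero sets of $a$ and $b$, to be the only genuinely technical part; the conditions \eqref{q_s} are used precisely to make the near-zero-set contributions vanish in the limit, mirroring their role in the counterexamples to regularity for \eqref{dp}.
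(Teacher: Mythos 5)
There is a genuine gap in the way you handle the power terms, although your overall plan (mollify, Jensen, exploit H\"older continuity of the coefficients and the arithmetic condition $q<1+\alpha/n$) is the right one and is very close in spirit to the paper's argument.

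The problem is the order in which you apply Jensen and the H\"older bound. After applying Jensen to $z\mapsto\snr{z}^q$ and then Fubini, your estimate for the power term reads
\[
\int_{B_r}a(x)\snr{Dw_\varepsilon}^q\,{\rm d}x
\ \leqslant\ \int_{B_{r'}}a\snr{Dw}^q\,{\rm d}x
\ +\ [a]_{0,\alpha}\,\varepsilon^\alpha\int_{B_{r'}}\snr{Dw}^q\,{\rm d}x.
\]
The first term is fine ($a\snr{Dw}^q\leqslant H(\cdot,Dw)\in L^1_{\loc}$), but the second involves $\int\snr{Dw}^q$, which is \emph{not} controlled: the hypothesis only gives $Dw\in L^1_{\loc}$ and $H(\cdot,Dw)\in L^1_{\loc}$, so $\snr{Dw}^q$ may fail to be locally integrable precisely where $a$ degenerates. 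The interpolation bound you then invoke is for $\nr{Dw_\varepsilon}_{L^q}$, not for $\nr{Dw}_{L^q}$, so it does not apply to the error term you have actually produced. The phase-splitting around $\{a=0\}$ is an attempt to patch this, but in the transition region where $a$ is small but nonzero the same mismatch recurs, and this is exactly the kind of bookkeeping the paper's scheme is designed to avoid (recall the authors explicitly claim to bypass the ``separation of phases'' trick). As a side remark, the exponent you quote, $\varepsilon^{-\alpha/q'}$, is not the correct scaling either; Young's inequality for convolution plus $L^1$--$L^\infty$ interpolation gives $\nr{Dw_\varepsilon}_{L^q}^q\lesssim\varepsilon^{-n(q-1)}\nr{Dw}_{L^1}^q$, and what saves the day is $\alpha-n(q-1)>0$ from \eqref{q_s}, not $\alpha(2-q)>0$.

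The paper's proof fixes exactly this issue by reversing the order of the two operations and by using the \emph{infimum}-regularized coefficients $a_\varepsilon(x):=\inf_{B_\varepsilon(x)}a$, $b_\varepsilon(x):=\inf_{B_\varepsilon(x)}b$. With $H_\varepsilon(x,z):=\snr{z}\log(1+\snr{z})+a_\varepsilon(x)\snr{z}^q+b_\varepsilon(x)\snr{z}^s$ one has the clean domination $H_\varepsilon(x,z)\leqslant H(y,z)$ for every $y\in B_\varepsilon(x)$, so Jensen applied to the whole convex integrand $z\mapsto H_\varepsilon(x,z)$ gives $H_\varepsilon(x,Dw_\varepsilon(x))\leqslant (H(\cdot,Dw))_\varepsilon(x)$ with no error term at all. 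The H\"older error then appears only in the pointwise comparison between $H$ and $H_\varepsilon$ evaluated at the \emph{mollified} gradient: using $\snr{Dw_\varepsilon}\lesssim\varepsilon^{-n}\nr{Dw}_{L^1}$ pointwise, one gets $(a-a_\varepsilon)(x)\snr{Dw_\varepsilon}^q\lesssim\varepsilon^{\alpha}\snr{Dw_\varepsilon}^{q-1}\snr{Dw_\varepsilon}\lesssim\varepsilon^{\alpha-n(q-1)}\snr{Dw_\varepsilon}\lesssim\snr{Dw_\varepsilon}$, which is a uniformly $L^1$-convergent quantity. This yields $H(x,Dw_\varepsilon(x))\leqslant c\bigl[\snr{Dw_\varepsilon(x)}+(H(\cdot,Dw))_\varepsilon(x)\bigr]$ and dominated convergence finishes the proof, with no phase splitting and no appeal to $\snr{Dw}^q\in L^1$. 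If you keep all error terms pointwise in $Dw_\varepsilon$ and apply Jensen only to a coefficient that is already dominated on the averaging ball, your argument will close without the gap.
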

\noindent
For more details about Lavrentiev gap we refer to \cite{ABF,Belloni-Buttazzo,ELM}. We remark that the proof closely follows that of \cite[Lemma 13]{ELM} and \cite[Section 5]{DFM23}. The substantial difference lies in the term $|Dw|\log(1+|Dw|)$. For completeness we give an outline of the proof.
\begin{proof}
Let $\varepsilon_0 \in (0,1]$ be such  that $B_{r + 2\varepsilon_0} \Subset \Omega$. Then $w \in  W^{1,1}(B_{r + 2\varepsilon_0}).$ For every $\varepsilon \in (0,\varepsilon_0)$ we denote
\begin{equation*}
    {w}_\varepsilon(x) \coloneqq  \int_{B_\varepsilon(x)} w(y)\varepsilon^{-n}\phi\left (\frac{x-y}{\varepsilon}\right) \, dy \coloneqq \int_{B_\varepsilon(x)} w(y)\phi_\varepsilon(x-y) \, dy, \quad \text{for all } x \in B_r,
\end{equation*}
where $\phi \in C^{\infty}_c(B_1(0))$, $\phi \geq 0$ and $\int_{B_1(0)} \phi(x) \, {\rm d}x =1$. It follows that ${w}_\varepsilon \in C^{\infty}(B_r)$ and
\begin{equation} \label{conv}
 {w}_\varepsilon \to w \quad \text{in } W^{1,1}(B_r).
\end{equation}
Moreover, for all $x \in B_r$ there exists a constant $c > 0$ such that
\begin{align} \label{above}
    |D {w}_\varepsilon(x)| \leq \frac{c}{\varepsilon^n}||Dw||_{L^{1}(B_{r+2\varepsilon_0})}.
\end{align}
Now, for all $x \in B_r$, we define 
$$ a_{\varepsilon}(x) \coloneqq \inf \{ a(y) : |x-y| \leq \varepsilon \},$$
$$ b_{\varepsilon}(x) \coloneqq \inf \{ b(y) : |x-y| \leq \varepsilon \},$$
and
$$H_\varepsilon(x,z) \coloneqq |z|\log(1+|z|) + a_\varepsilon(x)|z|^q + b_\varepsilon(x)|z|^s.$$
By definition, we have
\begin{align} \label{H1}
    H_\varepsilon(x,z) \leq H(y,z), \quad \text{for all } x \in B_r, \text{ for all }  y \in B_\varepsilon(x).
\end{align}
Moreover, if $|z| < \frac{c}{\varepsilon^n}$ for some $c>0$, it holds
\begin{align*}
    H_\varepsilon (x,z) & = H(x,z) + (a_\varepsilon(x)-a(x))|z|^q + (b_\varepsilon(x)-b(x))|z|^s \\ &
    \geq H(x,z) - c[a]_{0,\alpha, B_r} \varepsilon^\alpha|z|^{q-1}|z|  - c[b]_{0,\beta, B_r} \varepsilon^\beta|z|^{s-1}|z| \\ & \geq H(x,z) - c\varepsilon^{\alpha -n(q-1)}|z| -  c\varepsilon^{\beta -n (s-1)}|z|  \\ & \geq H(x,z) - c|z|\, ,
\end{align*}
where for the last inequality we have used \eqref{q_s}.
Then, for all $x \in B_r$, for all $|z|< \frac{c}{\varepsilon^n}$, it holds
\begin{equation} \label{H2}
    H(x,z) \leq (1+c)(|z| + H_\varepsilon(x,z)),
\end{equation}
with $c \equiv c(||Dw||_{L^1(B_{r + 2\varepsilon_0})},[a]_{0,\alpha,B_r}, [b]_{0,\beta,B_r})$. Now, we use the fact that $t \mapsto H_\varepsilon(x,t)$ is convex, Jensen inequality and \eqref{H1}, to obtain
\begin{align*}
H_\varepsilon(x,D {w}_\varepsilon(x)) & \leq \int_{B_\varepsilon(x)} H_\varepsilon(x,Dw(y))\phi_\varepsilon(x-y) \, dy \\ & \leq \int_{B_\varepsilon(x)} H(y,Dw(y))\phi_\varepsilon(x-y) \, dy  \\ & \eqcolon(H(\cdot,Dw(\cdot)))_\varepsilon (x).
\end{align*}
Bearing in mind \eqref{above} and \eqref{H2}, we have
\begin{equation*}
    H(x,D {w}_\varepsilon(x)) \leq c[|D {w}_\varepsilon(x)| +(H(\cdot,Dw(\cdot)))_\varepsilon (x)].
\end{equation*}
By \eqref{conv} and
$$ (H(\cdot,Dw(\cdot)))_\varepsilon (x) \to H(x,Dw(x)) \quad \text{strongly in } L^{1}(B_r) $$
we can use well-known variant of Lebesgue’s dominated convergence theorem to obtain that 
$$ H(x,D {w}_\varepsilon(x)) \to H(x,Dw(x))   \quad \text{strongly in } L^{1}(B_r).$$
This ends the proof.
\end{proof}
\section{$L^{\infty}$-estimate for the frozen problem} \label{frozen}
\noindent
 In this section we construct densities $H\os$ representing a regularized version of $H$, in this way we transform the starting problem into an elliptical one. At this point, we consider the frozen integrand $H\ios$ and, using the fact that the coefficients $a\ios, b\ios$ can be differenciate with continuity, we obtain an estimate for $||Dv\ios||_{L^{\infty}}$, where $v\ios$ is the minimizer of $H\ios$.

Let us start considering a ball $B_r \Subset \Omega$ and two parameters $\omega \in (0,1], \sigma \in (0,1)$. For $(x,z) \in B_r \times \R^n$ we define
$$ H_{\omega,\sigma}(x,z) \coloneqq \vl(z)\log(1+\vl(z)) + a_\sigma(x)(\vl(z))^q + b_\sigma(x)(\vl(z))^s,$$
where $a_\sigma$ and $b_\sigma$ have been introduced in Section \ref{results}. We observe that $z \mapsto \partial_{zz}H(x,z) \in C(\R^n)$. It is easy to check that, for every $x \in B_r$ and for every $z \in \R^n$, there exists a constant $c\equiv c(n,q,s,||a||_{C^{0,\alpha}}, ||b||_{C^{0,\beta}})$ such that
\begin{equation} \label{h_1}
\sigma \left [(\vl(z))^q + (\vl(z))^s \right ] \leqslant H\os(x,z) \leqslant c \left [ 1+(\vl(z))^q + (\vl(z))^s \right ],  
\end{equation}
\begin{equation} \label{h_2}
    \langle \partial_{zz} H\os(x,z)\xi,\xi \rangle \geqslant c\sigma \left [\ (\vl(z))^{q-2} + (\vl(z))^{s-2} \right]|\xi|^2,
\end{equation}
\begin{equation} \label{h_3}
    |\partial_z H\os(x,z)| \vl(z) + |\partial_{zz}H\os(x,z)| (\vl(z))^2 \leqslant c\left [ (\vl(z))^q + (\vl(z))^s \right],
\end{equation}
\begin{equation} \label{h_4}
     |\partial_z H\os(x,z) - \partial_z H\os(y,z)| \leqslant c \left [ |x-y|^\alpha (\vl(z))^{q-1} + |x-y|^\beta (\vl(z))^{s-1}\right],
\end{equation}
where, for the last property, we have used the fact that $a \in C^{0,\alpha}(\Omega)$ and $b \in C^{0,\beta}(\Omega)$. Now, let us determine two functions $\lambda\os, \Lambda\os: \Omega \times [0, \infty) \to [0,\infty)$ which represent respectively a bound for the lowest and highest eigenvalue of $\partial_{zz}H\os$. By direct computation we have
\begin{align*}
    \partial_z H\os(x,z) = \frac{z}{\vl(z)}\left [ \log(1+\vl(z)) + \frac{\vl(z)}{1+\vl(z)}\right] &+ qa_\sigma (x)(\vl(z))^{q-2}z \\ &+ sb_\sigma (x)(\vl(z))^{s-2}z,
\end{align*}
\begin{align*}
    \partial_{zz}H\os(x,z)= & \left [ \mathds{I}_{n\times n} + \frac{z\otimes z}{(\vl(z))^2} \right] \frac{\log(1+\vl(z))}{\vl(z)} \\ & \quad + \frac{\mathds{I}_{n\times n}}{1+\vl(z)} + \frac{z\otimes z}{(\vl(z))^2(1+\vl(z))^2} \\ & \quad + qa_\sigma(x)(\vl(z))^{q-2}\left [\mathds{I}_{n\times n} + (q-2)\frac{z\otimes z}{(\vl(z))^2}\right] \\ & \quad + sb_\sigma(x)(\vl(z))^{s-2}\left [\mathds{I}_{n\times n} + (s-2)\frac{z\otimes z}{(\vl(z))^2}\right].
\end{align*}
Then,
\begin{align*}
    \langle \partial_{zz}H\os(x,z) \xi, \xi \rangle & = \left [ |\xi|^2 - \frac{|z\cdot \xi|^2}{(\vl(z))^2}\right ] \frac{\log(1+(\vl(z)))}{\vl(z)}  \\ & \quad + \frac{|\xi|^2}{1+\vl(z)} + \frac{|z\cdot \xi|^2}{(\vl(z))^2(1+\vl(z))^2}  \\ &  \quad + qa_\sigma(x)(\vl(z))^{q-2} \left [ |\xi|^2 + (q-2)\frac{|z\cdot \xi|^2}{(\vl(z))^2} \right ] \\ &  \quad + sb_\sigma(x)(\vl(z))^{s-2} \left [ |\xi|^2 + (s-2)\frac{|z\cdot \xi|^2}{(\vl(z))^2} \right ],
\end{align*}
for all $\xi \in \R^n$, and
\begin{align*}
    |\partial_{zz}H\os(x,z)| \leqslant \frac{2n^2 \log(1+\vl(z))}{\vl(z)} &+ \frac{2n^2}{1+\vl(z)} \\ & + qn^2a_\sigma (x)\max\{1,q-1\}(\vl(z))^{q-2} \\ & + sn^2b_\sigma (x)\max\{1,s-1\}(\vl(z))^{s-2}.
\end{align*}
Hence, taking
\begin{align*}
     \lambda\os(x,|z|) \coloneqq \frac{1}{1 + \ell_{\omega}(z)} & + q\min\{1,q-1\} a_\sigma(x)(\vl(z))^{q-2} \\ & + s\min\{1,s-1\} b_\sigma(x)(\vl(z))^{s-2},
\end{align*}
\begin{align*}
    \Lambda\os(x,|z|) & \coloneqq 2n^2\left (\frac{\log(1+\vl(z))}{\vl(z)} + \frac{1}{1+\vl(z)} \right ) \\ & \quad + qn^2\max\{1,q-1\} a_\sigma(x)(\vl(z))^{q-2} \\ & \quad + sn^2\max\{1,s-1\} b_\sigma(x)(\vl(z))^{s-2},
\end{align*}
we have
\begin{equation} \label{h_lambda}
      \lambda\os(x,|z|)|\xi|^2 \leqslant \langle \partial_{zz} H\os(x,z) \xi, \xi \rangle, \quad   |\partial_{zz} H\os(x,z)| \leqslant |\Lambda\os(x,|z|)|,
\end{equation}
for all $\xi \in \R^n$. In particular, it holds 
\begin{align*}
    \frac{\Lambda\os(x,|z|)}{\lambda\os(x,|z|)} \leqslant c(n,q,s)\log(1+\vl(z)), \quad \text{for all } z \in \R^n : |z|\geqslant e-1.
\end{align*}
Moreover, we observe that using \eqref{campo_v_1}, \eqref{campo_v_2} and the first inequality in \eqref{h_lambda}, we get
\begin{equation*}
     \langle \partial_z H\os(x,z_1) - \partial_z H\os(x,z_2), z_1 - z_2 \rangle \geqslant c\mathcal{V}^2_{\omega,\sigma}(x,z_1,z_2),
\end{equation*}
for all $x \in B_r$, for all $z,z_1,z_2 \in \R^n$, where $c\equiv c(n,q,s,||a||_{C^{0,\alpha}}, ||b||_{C^{0,\beta}})$. \\ \indent
Now, let us consider the frozen integral
$$ H\ios(z) \coloneqq \vl(z)\log(1+\vl(z)) + a\is(B_r)(\vl(z))^q +  b\is(B_r)(\vl(z))^s, $$
where $a\is$ and $b\is$ have been defined in the preliminaries. For $H\ios$ are in force the same properties of $H\os$, namely $z \mapsto \partial_{zz}H\ios \in C(\R^n)$ and
\begin{equation} \label{H_froz_1}
\sigma \left [(\vl(z))^q + (\vl(z))^s \right ] \leqslant H\ios(z) \leqslant c \left [ 1+(\vl(z))^q + (\vl(z))^s \right ], 
\end{equation}
\begin{equation} \label{H_froz_2}
    \langle \partial_{zz} H\ios(z)\xi,\xi \rangle \geqslant c\sigma \left [\ (\vl(z))^{q-2} + (\vl(z))^{s-2} \right]|\xi|^2,
\end{equation}
\begin{equation} \label{H_froz_3}
    |\partial_z H\ios(z)| \vl(z) + |\partial_{zz}H\ios(z)| (\vl(z))^2 \leqslant c\left [ (\vl(z))^q + (\vl(z))^s \right],
\end{equation}
\begin{equation} \label{H_froz_4}
     \langle \partial_z H\ios(z_1) - \partial_z H\ios(z_2), z_1 - z_2\rangle \geqslant c\mathcal{V}^2_{\omega,\sigma,\iota}(z_1,z_2,B_r),
\end{equation}
for all $x \in B_r$, for all $z,z_1,z_2 \in \R^n$, where $c \equiv c(n,q,s,||a||_{C^{0,\alpha}}, ||b||_{C^{0,\beta}})$. Similarly, denoting by $\lambda\ios$ and $\Lambda\ios$ respectively the bound for the lowest and the highest eigenvalue of $\partial_{zz}H\ios$, for all $\xi \in \R^n$, we have
\begin{equation}  \label{H_froz_5}
   \lambda\ios(|z|)|\xi|^2 \leqslant \langle \partial_{zz} H\ios(z) \xi, \xi \rangle, \quad 
     |\partial_{zz} H\ios(z)| \leqslant \Lambda\ios(|z|), 
\end{equation}
\begin{equation} \label{frozen_autovalori}
    \frac{\Lambda\ios(|z|)}{\lambda\ios(|z|)} \leqslant c(n,q,s)\log(1+\vl(z)), \quad \text{for all } z \in \R^n : |z|\geqslant e-1.
\end{equation}
Now, another quantity that will play a crucial role in the proof of the Lipschitz estimate is
$$ E\os(x,t)\coloneqq \int_{0}^t \lambda\os(x,\Tilde{t})\Tilde{t} \, {\rm d}\Tilde{t}, $$
where $x \in \Omega$ and $t \geqslant 0$, with the frozen counterpart defined as
$$ E\ios(t)\coloneqq \int_{0}^t \lambda\ios(\Tilde{t})\Tilde{t} \, {\rm d}\Tilde{t}. $$
By direct computations we get
\begin{align*}
    E\os(x,t) \coloneqq \vl(t) - \log(1+\vl(t)) & + \min\{1,q-1\}a_\sigma(x)(\vl(t))^q \\ & + \min\{1,s-1\}b_\sigma(x)(\vl(t))^s - \Tilde{e}\os(x),
\end{align*}
\begin{align*}
    E\ios(t) \coloneqq \vl(t) - \log(1+\vl(t)) & + \min\{1,q-1\}a\is(B_r)(\vl(t))^q \\ & + \min\{1,s-1\}b\is(B_r)(\vl(t))^s - \Tilde{e}\ios,
\end{align*}
where
\begin{align*}
    \Tilde{e}\os(x) \coloneqq \omega - \log(1+\omega) + \min\{1,q-1\}a_\sigma(x)\omega^q + \min\{1,s-1\}b_\sigma(x)\omega^s
\end{align*}
and
\begin{align*}
    \Tilde{e}\ios \coloneqq \omega - \log(1+\omega) + \min\{1,q-1\}a\is(B_r)\omega^q + \min\{1,s-1\}b\is(B_r)\omega^s.
\end{align*}
We can also derive the following inequalities
\begin{align} \label{in1}
  &  |E\ios(t) - E\ios(\Tilde{t})| \notag \\ & \qquad  \leqslant \left [ 1 + a^{\iota}_\sigma(B_r)(t^2 +\Tilde{t}^2 + \omega^2)^{\frac{q-1}{2}} + b^{\iota}_\sigma(B_r)(t^2 +\Tilde{t}^2 + \omega^2)^{\frac{s-1}{2}}\right]|t-\Tilde{t}|,
\end{align}
\begin{align} \label{3.10}
    |E\os(x,t) - E\ios(t)|& \leqslant c(q)|a(x) - a^\iota(B_r)|\left [(\vl(t))^q -\omega^q\right] \notag \\ & \quad + c(s)|b(x) - b^\iota(B_r)|\left [(\vl(t))^s -\omega^s\right],
\end{align}
that hold for all $x \in \Omega$, for all $t,\Tilde{t} \geqslant 0$. Moreover, we can see that there exists a number $c_0 \in [1, \infty)$ such that, for all $t \geqslant 1$,
\begin{align} \label{crescita}
   \vl(t) \leqslant c_0 E\os(x,t) \quad \text{and} \quad  \vl(t) \leqslant c_0 E\ios(t).
\end{align}
Finally, let us define two more quantities that will be used later
\begin{equation} \label{tmq} 
\begin{split}
    \Tilde{E}\os(x,t) & \coloneqq E\os(x,t) + \log(1+\vl(t)) + \Tilde{e}\os(x),\\
    \Tilde{E}\ios(t) & \coloneqq E\ios(t) + \log(1+\vl(t)) + \Tilde{e}\ios.
    \end{split}
\end{equation}
In particular, we observe that,
\begin{equation} \label{tilde_e_1}
\Tilde{E}\os(x,|z|) \leqslant E\os(x,|z|) + \log(1+\vl(z)) + \Tilde{e}\os(x),
    \end{equation}
\begin{equation} \label{tilde_e}
\Tilde{E}\ios(|z|) \leqslant E\ios(|z|) + \log(1+\vl(z)) + \Tilde{e}\ios.
    \end{equation}
We are, now, ready to prove the following theorem.
From now on, we take
\begin{equation}\label{mu}
    \mu \coloneqq \max \{q,s\}.
\end{equation}
\begin{thm} \label{teorema3.1}
    Let $B_r \Subset \Omega$, $r \in (0,1]$, and let $u_0 \in W^{1,\infty}(B_r)$. Then there exists an unique solution $v\ios \in u_0 + W^{1,\mu}_0(B_r)$ of the Dirichlet problem
    \begin{equation} \label{dirichlet_1}
     u_0 + W^{1,\mu}_0(B_r)  \ni w \mapsto \min\int_{B_r} H\ios(Dw) \, {\rm d}x.
    \end{equation}
    Moreover, for any $\delta \in (0,1)$, there exists a constant $c \equiv c(n,q,s,||a||_{C^{0,\alpha}}, ||b||_{C^{0,\beta}},\delta)$ such that
    \begin{equation} \label{stima_v}
        ||Dv\ios||_{L^{\infty}(B_{3r/4})} \leqslant c\left \{\left [ H\ios(||Du_0||_{L^{\infty}(B_r)})\right]^{\delta} ||Du_0||_{L^{\infty}(B_r)} + 1 \right \}.
    \end{equation}
\end{thm}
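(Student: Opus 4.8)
The plan is to prove \eqref{stima_v} by a De Giorgi iteration applied to a nonlinear function of $Dv\ios$, exploiting that the frozen density $H\ios$ is autonomous, of class $C^{2}$, strictly convex for each fixed $(\omega,\sigma)$, and has logarithmically controlled ellipticity ratio, cf.\ \eqref{frozen_autovalori}; the delicate point throughout is that every constant must stay uniform in $\omega\in(0,1]$, $\sigma\in(0,1)$. \emph{Existence, uniqueness and qualitative regularity.} The functional in \eqref{dirichlet_1} is convex, each summand of $H\ios$ being a convex increasing function of the convex map $z\mapsto\vl(z)$, and, by the left-hand side of \eqref{H_froz_1}, coercive on $u_0+W^{1,\mu}_{0}(B_r)$ for the $W^{1,\mu}$-topology, $\mu=\max\{q,s\}$ as in \eqref{mu}. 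The direct method yields a minimizer $v\equiv v\ios$, unique by strict convexity, guaranteed by \eqref{H_froz_2}. For $(\omega,\sigma)$ fixed, $H\ios$ has $\mu$-growth from above and $\sigma$-elliptic $\mu$-growth from below, with eigenvalue ratio bounded on bounded sets; classical regularity theory for such integrands (difference-quotient bootstrap, De Giorgi--Nash--Moser for the differentiated equation, Schauder) then gives $v\in C^{1,\gamma}_{\loc}(B_r)\cap W^{2,2}_{\loc}(B_r)$, in particular $Dv\in L^{\infty}_{\loc}(B_r)$ with an a priori, possibly $(\omega,\sigma)$-dependent, bound. This is used only to legitimise the computations below and to know that $\Theta:=\log(1+\|Dv\|_{L^{\infty}(B_{4r/5})})<\infty$. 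Finally $v$ solves $\int_{B_r}\langle\partial_z H\ios(Dv),D\varphi\rangle\,{\rm d}x=0$ for all $\varphi\in W^{1,\mu}_{0}(B_r)$, and differentiating along $e_\ell$ shows that each $\partial_\ell v$ solves $\int_{B_r}\langle\partial_{zz}H\ios(Dv)\,D\partial_\ell v,D\varphi\rangle\,{\rm d}x=0$.

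\emph{Caccioppoli on level sets and iteration.} Set $\Phi:=\Tilde{E}\ios(|Dv|)$, cf.\ \eqref{tmq}; from \eqref{crescita} and the explicit form of $\Tilde{E}\ios$ one reads off the comparisons
\[
c\,\vl(|Dv|)\le\Phi,\quad c\,a\is(B_r)\,(\vl(|Dv|))^{q}\le\Phi,\quad c\,b\is(B_r)\,(\vl(|Dv|))^{s}\le\Phi,\quad \Phi\le c\,H\ios(Dv)+c.
\]
For concentric balls $B_{\rho/2}\subset B_\rho\subset B_{4r/5}$, a level $k\ge k_0:=\Tilde{E}\ios(e-1)$, and a cutoff $\eta$, test the differentiated equation, summed over $\ell$, with a test function built from $\eta^{2}$, $(\Phi-k)_{+}$ and $Dv$ (the usual nondecreasing weight in $\Phi$ is needed to reconstruct $D(\Phi-k)_+$). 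Using \eqref{H_froz_2}, \eqref{H_froz_3}, \eqref{H_froz_5} to bound the bilinear form from below and above, and \eqref{frozen_autovalori} to estimate the ellipticity ratio by $\log(1+\vl(|Dv|))\le\Theta$ on $\{\Phi>k\}$ (where $|Dv|>e-1$), one obtains, after Sobolev--Poincar\'e, a reverse-H\"older inequality of De Giorgi type
\[
\Bigl(\fint_{B_{\rho/2}}(\Phi-k)_{+}^{2\chi}\,{\rm d}x\Bigr)^{1/\chi}\le c\,\Theta^{\k}\fint_{B_\rho}(\Phi-k)_{+}^{2}\,{\rm d}x,\qquad \chi\equiv\chi(n)>1,\quad\k\equiv\k(n)>0,
\]
with no forcing term, the frozen problem being autonomous. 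Applying Lemma \ref{revlem} with all potentials absent ($L_1=\dots=L_4=0$), $L_0\approx c\,\Theta^{\k/2}$, $k_0$ as above, then $(\fint\Phi^{2})^{1/2}\le\|\Phi\|_{L^{\infty}}^{1/2}(\fint\Phi)^{1/2}$, Young's inequality, Lemma \ref{lemma_giusti} (to reabsorb the $L^{\infty}$-term and to pass from $B_{4r/5}$ to $B_r$), and a standard iteration on shrinking balls exploiting the slow growth of the logarithm — as in \cite{DFM23} — to place the same ball on both sides, we reach a closed inequality
\[
\sup_{B_{3r/4}}\Phi\le c\,[\log(2+\sup_{B_{3r/4}}\Phi)]^{\k'}\fint_{B_r}\Phi\,{\rm d}x+c,\qquad \k'\equiv\k'(n)>0.
\]

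\emph{Conclusion.} By minimality of $v$ and monotonicity of $t\mapsto H\ios(t)$, $\fint_{B_r}\Phi\,{\rm d}x\le c\fint_{B_r}H\ios(Du_0)\,{\rm d}x+c\le c\,H\ios(\|Du_0\|_{L^{\infty}(B_r)})+c=:c\,\mathcal{H}_0+c$. Since $[\log(2+t)]^{\k'}$ grows slower than any positive power of $t$, for each $\delta\in(0,1)$ Young's inequality absorbs the logarithm and gives $\sup_{B_{3r/4}}\Phi\le c(\delta)\,\mathcal{H}_0^{1+\delta}+c$. The three lower comparisons for $\Phi$ then yield
\[
\|Dv\|_{L^{\infty}(B_{3r/4})}\le c\min\Bigl\{\mathcal{H}_0^{1+\delta},\ (a\is(B_r))^{-1/q}\mathcal{H}_0^{(1+\delta)/q},\ (b\is(B_r))^{-1/s}\mathcal{H}_0^{(1+\delta)/s}\Bigr\}+c.
\]
Because $\mathcal{H}_0=H\ios(\|Du_0\|_{L^{\infty}})$ itself carries the factors $a\is(B_r)$, $b\is(B_r)$ in front of $(\vl(\|Du_0\|_{L^{\infty}}))^{q}$, $(\vl(\|Du_0\|_{L^{\infty}}))^{s}$, the negative powers of the coefficients cancel; a short case analysis according to which of the three summands of $H\ios$ dominates $\mathcal{H}_0$ (e.g.\ if the $q$-term dominates, the second bound gives $\|Dv\|_{L^\infty}\le c(\delta)\mathcal{H}_0^{\delta/q}\|Du_0\|_{L^\infty}+c$) turns the right-hand side into $c(\delta)\,[\mathcal{H}_0]^{\delta}\|Du_0\|_{L^{\infty}(B_r)}+c$ after relabelling $\delta$, with constants independent of $\omega,\sigma$. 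This is \eqref{stima_v}.

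\emph{Main obstacle.} The crux is this last bookkeeping: recovering the sharp first power of $\|Du_0\|_{L^{\infty}}$ while keeping all constants uniform in the regularisation parameters. This is what compels one to run the iteration on the genuine energy $\Tilde{E}\ios(|Dv|)$ — so that the modulating infima $a\is(B_r)$, $b\is(B_r)$ enter symmetrically and cancel against those hidden inside $\mathcal{H}_0$ — rather than on $\vl(|Dv|)$, and to treat the borderline logarithmic ellipticity \eqref{frozen_autovalori} exactly at the threshold where absorption by an arbitrarily small power is still possible, which is the mechanism letting $\delta\in(0,1)$ be free. The single-phase version of this estimate is the one obtained in \cite{DFM23}, whose scheme we adapt.
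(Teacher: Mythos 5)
Your proposal tracks the paper's proof quite faithfully: existence and uniqueness by the direct method, qualitative $C^{1}\cap W^{2,2}_{\loc}$ regularity to justify differentiating the Euler--Lagrange system, a Caccioppoli inequality on level sets of a Moser-type composition of $|Dv|$ with the logarithmic ellipticity ratio \eqref{frozen_autovalori} appearing as the constant, absorption of that log factor via Young's inequality at the cost of an arbitrarily small power of the gradient, the iteration Lemma~\ref{lemma_giusti}, and the minimality of $v$ against $u_0$. Two points where you diverge in detail, both of which can be made to work but where the paper's route is smoother.

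First, you run De~Giorgi on $\Phi=\Tilde{E}\ios(|Dv|)$, whereas the paper runs it on $E\ios(|Dv|)=\int_0^{|Dv|}\lambda\ios(\tilde t)\tilde t\,{\rm d}\tilde t$ and only passes to $\Tilde{E}\ios$ afterwards. This matters because the Caccioppoli estimate is not derived from scratch but imported from \cite[Lemma~4.5]{BM}, whose hypothesis singles out precisely the antiderivative $E\ios$ of $\lambda\ios(t)t$; $\Tilde{E}\ios=E\ios+\log(1+\vl(\cdot))+\Tilde{e}\ios$ differs by a non-constant additive term, so your Caccioppoli on $(\Phi-k)_+$ would need a separate justification (manageable, since the extra term is logarithmic, but not automatic).

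Second, the endgame. You arrive at $\Tilde{E}\ios(\|Dv\|_{L^\infty(B_{3r/4})})\le c\,\mathcal H_0^{1+\delta'}+c$ and then do a case analysis on which summand of $\mathcal H_0=H\ios(\|Du_0\|_{L^\infty})$ dominates, using the three lower comparisons for $\Tilde{E}\ios$. This can be made rigorous, but it is more delicate than your sketch suggests: in the log-dominant case ($T_1=\vl\log(1+\vl)$ maximal) the first comparison gives only $\|Dv\|\lesssim(\vl\log(1+\vl))^{1+\delta'}$ and you need yet another ``$\log\le$ power'' absorption and a relabelling $\delta'+\epsilon\le\delta$ before the target $\mathcal H_0^\delta\|Du_0\|$ appears; and in the power-dominant cases the negative power $(a\is)^{-1/q}$ has to be controlled not by $\sigma$ but by the dominance relation itself (namely $a\is\vl^q\ge T_1\ge\log 2$ once $\|Du_0\|\ge1$), which you state only implicitly via ``the negative powers cancel''. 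The paper sidesteps all of this cleanly: it first rewrites the bound as $\Tilde{E}_0(\|Dv\|)\le c\,[H\ios(\|Du_0\|)]^\delta\,\Tilde{E}_0(\|Du_0\|)+c$ using the explicit structure of $H\ios$ and one more application of \eqref{log}, and then applies the inverse $\Tilde{E}_0^{-1}$, which is increasing, concave and vanishes at $0$, hence subadditive with $\Tilde{E}_0^{-1}(c_1 t)\le(c_1+1)\Tilde{E}_0^{-1}(t)$; this pulls the factor $[H\ios(\|Du_0\|)]^\delta$ outside in one stroke, giving \eqref{stima_v} directly and with manifestly $(\omega,\sigma)$-uniform constants. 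Your case analysis buys nothing over this inversion trick and is considerably more laborious; I would replace it.

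Everything else — the role of \eqref{frozen_autovalori}, the De~Giorgi closure with $L_1=\dots=L_4=0$ in Lemma~\ref{revlem}, the minimality comparison against $u_0$, the uniformity in $(\omega,\sigma)$ — matches the paper.
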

\begin{proof}
    We start observing that by Direct methods of the Calculus of Variations there exists a solution $v\ios \in u_0 + W^{1,\mu}_0(B_r)$ of the Dirichlet problem \eqref{dirichlet_1}. The uniqueness follows by the strict convexity of $w \mapsto \int_{B_r} H\ios(Dw) \, {\rm d}x$. By minimality, the following Euler-Lagrange equation holds
    \begin{equation} \label{EL}
         \int_{B_r} \langle \partial H\ios (Dv\ios),D\varphi \rangle \, {\rm d}x =0,
    \end{equation}
    for all $\varphi \in W^{1,\mu}_0(B_r)$. By \eqref{H_froz_1}-\eqref{H_froz_3} and standard regularity theory \cite{giagiu, giusti} we have
    \begin{equation*}
        v\ios \in W^{1,\infty}_{\text{loc}}(B_r) \cap W^{2,2}_{\text{loc}}(B_r) \quad \text{and} \quad \partial_z H{\ios}(Dv\ios) \in W^{1,2}_{\text{loc}} (B_r, \R^n).
    \end{equation*}
So, we can differenciate again \eqref{EL} to get
\begin{equation} \label{EL2}
    \sum_{s=1}^n \int_{B_r} \langle \partial_{zz}H\ios(Dv\ios)D_s Dv\ios, D\varphi \rangle \, {\rm d}x =0,
\end{equation}
for all $\varphi \in W^{1,2}(B_r)$ with $\text{supp}(\varphi) \Subset B_r.$ \\
 \indent For simplicity of notation we omit the parameters $\omega, \sigma$ since the estimates obtained will be independent of them.  
\\ \indent 
Now, let $\rho >0$ be such that $B_\rho \Subset B_r$. Recalling the bound \eqref{frozen_autovalori}, we apply Lemma 4.5 in \cite{BM} with $G_{\overline{T}} = \ei$ and $f=0$ obtaining
\begin{align} \label{bm}
    \int_{B_{\rho/2}} |D(\ei(|Dv^\iota|) -k)_+|^2 \, {\rm d}x \leqslant \frac{c\log(1+\vl(M))}{\rho^2} \int_{B_\rho}(\ei(|Dv^\iota|)-k)_+^2 \, {\rm d}x,
\end{align}
for some $M \geqslant \max\{||Dv^\iota||_{L^{\infty}({B_{\rho}})},e -1 \}$, with $c\equiv c(n,q,s)$. We point out that equation (4.32) in \cite{BM} is our \eqref{EL2}. Now, label \eqref{bm} in conjunction with Sobolev embedding theorem led to
\begin{equation*}
    \left ( \fint_{B_{\rho/2}}(\ei(|Dv^\iota|) - k)_+^{2\chi} {\rm d}x \right )^{\frac{1}{\chi}} \leqslant c\log(1+\vl(M))\fint_{B_\rho} (\ei(|Dv^\iota|)-k)_+^2 \, {\rm d}x,
\end{equation*}
where $\chi \equiv \chi(n) >1$. At this point we fix parameters $3r/4 \leqslant  d_1 < d_2 \leqslant 5r/6$, we take $x_0 \in B_{d_1}$ and  $r_0 \coloneqq (d_2 - d_1)/8$.  Without loss of generality we can assume $||Dv^\iota||_{L^{\infty}(B_{3r/4})} \geqslant e-1$, otherwise \eqref{stima_v} follows trivially. Then we can choose $M = ||Dv^\iota||_{L^{\infty}({B_{d_2}})}$. We apply Lemma \ref{revlem} with $k_0 = 0$ to get
\begin{equation*}
    \ei(|Dv^\iota(x_0)|) \leqslant c[\log(1+\vl(M))]^{\frac{\chi}{\chi-1}} \left ( \fint_{B_{r_0}} \ei(|Dv^\iota|)^2 \, {\rm d}x\right )^{\frac{1}{2}}.
\end{equation*}
Since $x_0$ is arbitrary, we have
\begin{equation} \label{dopo_young}
     \ei(||Dv^\iota||_{L^{\infty}(B_{d_1})}) \leqslant \frac{c[\log(1+\vl(M))]^{\frac{\chi}{\chi-1}}}{(d_2 - d_1)^{n/2}} \left ( \int_{B_{5r/6}} \ei(|Dv^\iota|)^2 \, {\rm d}x\right )^{\frac{1}{2}}.
\end{equation}
At this stage we recall the following key-property of logarithms 
\begin{equation} \label{log}
  \log(1+t) \leqslant \frac{(1+t)^{\epsilon}}{{\epsilon}},
\end{equation}
that holds for all $\epsilon >0$ and $t \geqslant 1$, and we observe that for all $\delta \in (0,1)$ it is possible to find $\epsilon >0$ such that
$$ \epsilon\frac{\chi}{\chi -1} < \frac{\delta}{2(2+\delta)}.$$
Using the information in the last display, we apply \eqref{crescita} and \eqref{log} to \eqref{dopo_young}, and we get
\begin{align*}
    \ei(||Dv^\iota||_{L^{\infty}(B_{d_1})}) & \leqslant \frac{c[\log(1+\vl(||Dv^\iota||_{L^{\infty}(B_{d_2})})]^{\frac{\chi}{\chi-1}}}{(d_2 - d_1)^{n/2}} \left ( \int_{B_{5r/6}} \ei(|Dv^\iota|)^2 \, {\rm d}x\right )^{\frac{1}{2}} \\ & \leqslant \frac{c(\vl(||Dv^\iota||_{L^{\infty}(B_{d_2})}))^{\frac{\delta}{2(2+\delta)}}}{(d_2 - d_1)^{n/2}} \left ( \int_{B_{5r/6}} \ei(|Dv^\iota|)^2 \, {\rm d}x\right )^{\frac{1}{2}} \\ & \leqslant \frac{c(\ei(||Dv^\iota||_{L^{\infty}(B_{d_2})}))^{\frac{1+\delta}{2+\delta}}}{(d_2- d_1)^{n/2}} \left ( \int_{B_{5r/6}} (1+H^\iota(Dv^\iota)) \, {\rm d}x\right )^{\frac{1}{2}} \\ & \leqslant \frac{\ei(||Dv^\iota||_{L^{\infty}(B_{d_2})})}{2}  \\ & \quad + \frac{c}{(d_2 - d_1)^{n(1+\delta/2)}} \left ( \int_{B_r} (1+H^{\iota}(Du_0)) \, {\rm d}x \right )^{1+\frac{\delta}{2}},
\end{align*}
with $c \equiv c(n,q,s,\delta)$; for the last inequality we used the minimality of $v^\iota$ and Young's inequality with the conjugate exponents $$ \left ( \frac{2+\delta}{1+\delta}\right ) \quad \text{and} \quad  (2+\delta).$$
Now, applying Lemma \ref{lemma_giusti}, we get
$$ E^\iota(||Dv^{\iota}||_{L^{\infty}(B_{3r/4})})   \leqslant c \left ( \fint_{B_r} (1+H^\iota(Du_0)) \, {\rm d}x \right )^{1+\frac{\delta}{2}}. $$
By \eqref{tilde_e}, we have
\begin{align*}
    \Tilde{E}^{\iota}(||Dv^\iota||_{L^{\infty}(B_{3r/4})}) & \leqslant E^\iota(||Dv^\iota||_{L^{\infty}(B_{3r/4})}) + \log(1+\vl(||Dv^\iota||_{L^{\infty}(B_{3r/4})})) + \Tilde{e}^\iota
    \\ & \leqslant  c \left ( \fint_{B_r} (1+H^\iota(Du_0)) \, {\rm d}x \right )^{1+\frac{\delta}{2}} + \frac{ \Tilde{E}^{\iota}(||Dv^\iota||_{L^{\infty}(B_{3r/4})})}{2} + \Bar{c},
\end{align*}
for some positive constant $\Bar{c} \equiv \Bar{c}(n,q,s,\delta)$, which implies
\begin{align} \label{ee}
     \Tilde{E}^{\iota}(||Dv^\iota||_{L^{\infty}(B_{3r/4})}) & \leqslant c\left[H^{\iota}(||Du_0||_{L^{\infty}(B_r)})\right]^{1+\frac{\delta}{2}} + \Bar{c}   \notag \\ & \leqslant c\left[H^{\iota}(||Du_0||_{L^{\infty}(B_r)})\right]^{\frac{\delta}{2}}\log(1+\vl(||Du_0||_{L^{\infty}(B_r)})) \notag\\ & \quad \times  \vl(||Du_0||_{L^{\infty}(B_r)}) \notag \\ & \quad + c\left[H^{\iota}(||Du_0||_{L^{\infty}(B_r)})\right]^{\frac{\delta}{2}} \left [ a\is(B_r)(\vl(||Du_0||_{L^{\infty}(B_r)}))^q \right. \notag \\ & \quad  + \left . b\is(B_r)(\vl(||Du_0||_{L^{\infty}(B_r)}))^s \right] + \Bar{c}\notag \\ & \leqslant c\left \{\left [ H^{\iota}(||Du_0||_{L^{\infty}(B_r)})\right]^{\delta}\Tilde{E}^{\iota}(||Du_0||_{L^{\infty}(B_r)}) + 1 \right \},
\end{align}
 where $c\equiv c(n,q,s,||a||_{C^{0,\alpha}}, ||b||_{C^{0,\beta}},\delta)$; for the last inequality we used \eqref{log} with $\epsilon = \delta/2$. Now, we set $\Tilde{E}_0(t) \coloneqq t + a\is(B_r)t^q+b\is(B_r)t^s$ and we notice that
$$ \Tilde{E}_0(t) \leqslant \Tilde{E}^{\iota}(t) \leqslant c(q,s)(\Tilde{E}_0(t) + \Tilde{E}_0(\omega)).$$
So, inequality \eqref{ee} reads as
\begin{equation} \label{last_ee}
\begin{split}
   & \Tilde{E}_0(||Dv^\iota||_{L^{\infty}(B_{3r/4})}) \\
   & \leqslant c\left \{\left [ H^{\iota}(||Du_0||_{L^{\infty}(B_r)})\right]^{\delta} \Tilde{E}_0(||Du_0||_{L^{\infty}(B_r)}) + 1 \right \}.
    \end{split}
\end{equation}
Now, to conclude, we observe that $\Tilde{E}_0(\cdot)$ is monotone, increasing, convex and $\Tilde{E}_0(0)=0$, so, the inverse $\Tilde{E}_0^{-1}$ is increasing, concave and $\Tilde{E}_0^{-1}(0)=0$. Then, by subadditivity, we have $\Tilde{E}_0^{-1}(c_1 t) \leqslant (c_1 +1)\Tilde{E}_0^{-1}(t)$, for any constant $c_1 \geqslant 0$, see \cite[Section 4]{DFM23}. Therefore we can apply $\Tilde{E}_0^{-1}$ on both sides of \eqref{last_ee} achieving \eqref{stima_v}.

\end{proof}

\section{Main Theorem}\label{main}
\noindent
This section is devoted to the proof of the main result of the present paper, that is Theorem \ref{theorem4.1}.
Assuming the existence of a minimizer $u \in W^{1,1}$ of the functional \eqref{functional}, we show that $u$ has locally H\"older continuous gradient by showing that it can be locally approximated, in energy, by more regular maps satisfying suitable uniform Lipschitz bounds. Once proven gradient boundedness, the amount of nonuniform ellipticity of \eqref{functional} becomes immaterial and gradient H\"older continuity follows by more standard means.
\subsection*{Proof of Theorem \ref{theorem4.1}}
For the ease of exposition, we divide the proof into eight steps.
\\ \\
{\it Step 1}: In this step we find two sequences $\{Du\oe\}_\omega$ and $\{u_\varepsilon\}_\varepsilon$ such that 
$$u\oe \rightharpoonup u_\varepsilon \quad \text{weakly in  } W^{1,\mu}(B_r) \quad \text{and} \quad u_\varepsilon \rightharpoonup u \quad \text{weakly in } W^{1,1}(B_r).$$
We start applying Lemma \ref{lemma_lavrentiev} in order to find a sequence $\{ \Tilde{u}_{\varepsilon} \} \in W^{1,\infty}(B_r)$ such that
\begin{equation} \label{convergence}
  \Tilde{u}_{\varepsilon} \to u \text{ in } W^{1,1}(B_r) \quad \text{and} \quad \mathcal{L}(\Tilde{u}_\varepsilon, B_r) \to \mathcal{L}(u, B_r).
  \end{equation}
Let $ \omega \equiv \{\omega \} \equiv \{\omega_k \}_{k \in \N} \in (0,1]$ be a decreasing sequence such that $\omega \to 0$ and let $\{\sigma_\varepsilon \}$ be defined by
$$ \sigma_\varepsilon \coloneqq \left( 1+ \varepsilon^{-1} + ||D\Tilde{u}_\varepsilon||^{2q}_{L^q(B_r)} + ||D\Tilde{u}_\varepsilon||^{2s}_{L^s(B_r)}\right)^{-1},$$
then
\begin{equation} \label{sigma}
    \sigma_{\varepsilon} \int_{B_r} [( \ell_{\omega}(D\Tilde{u}_\varepsilon))^q + ( \ell_{\omega}(D\Tilde{u}_\varepsilon))^s ] \, {\rm d}x \to 0.
\end{equation}
We consider the Dirichlet problem
\begin{equation} \label{dirichlet}
     \Tilde{u}_{\varepsilon} + W^{1,\mu}_{0}(B_r) \ni w \mapsto \min \mathcal{L}_{\omega, \varepsilon} (w,B_r),
\end{equation}
where
$$ \mathcal{L}_{\omega,\varepsilon}(w,B_r) \coloneqq \int_{B_r} H_{\omega, \sigma_\varepsilon}(x, Dw) \, {\rm d}x,$$
we recall that $H_{\omega, \sigma_\varepsilon}$ has been defined at the beginning of Section \ref{frozen} with $\sigma = \sigma_\varepsilon$.
By Direct Methods of the Calculus of Variations and standard strict convexity arguments there exists $u_{\omega, \varepsilon} \in  \Tilde{u}_{\varepsilon} + W^{1,\mu}_{0}(B_r) $ unique solution of \eqref{dirichlet}. By \eqref{h_1}-\eqref{h_3} we can apply the regularity theory to get
\begin{equation} \label{holder}
    u_{\omega, \varepsilon} \in C^{1,\Tilde{\gamma}}_{\text{loc}}(B_r) \quad \text{for some } \Tilde{\gamma} \equiv \Tilde{\gamma}(n,q,s,\omega,\varepsilon) \in (0,1),
\end{equation}
see \cite{giagiu} and \cite{giagiu2}. Now, we use \eqref{sigma} and the generalized dominate convergence theorem to obtain
\begin{align*}
| \mathcal{L}_{\omega,\varepsilon}&(\Tilde{u}_\varepsilon, B_r) -  \mathcal{L}(\Tilde{u}_\varepsilon, B_r)| \\ & \leqslant \int_{B_r} |\ell_\omega(D\vu)\log(1+\ell_\omega(D\vu)) - |D\vu|\log(1+|D\vu|)| \, {\rm d}x \\ & \quad + \int_{B_r} a(x)|(\vl(D\vu))^q - |D\vu|^q| \, {\rm d}x + \sigma_\varepsilon \int_{B_r} (\vl(D\vu))^q \, {\rm d}x \\ & \quad + \int_{B_r} b(x)|(\vl(D\vu))^s - |D\vu|^s| \, {\rm d}x + \sigma_\varepsilon \int_{B_r} (\vl(D\vu))^s \, {\rm d}x   \\ & \leqslant o_\varepsilon(\omega) + o(\varepsilon),
\end{align*}
where $o_\varepsilon(\omega)$ and $o(\varepsilon)$ denote quantities such that $o_\varepsilon(\omega) \to 0$ if $\omega \to 0$, for every fixed $\varepsilon$, and $o(\varepsilon) \to 0$ if $\varepsilon \to 0$. Now,  we use the information in the last display together with the minimality and \eqref{convergence} to get
\begin{align} \label{con_l}
    \mathcal{L}_{\omega,\varepsilon}(u_{\omega,\varepsilon}, B_r) \notag & \leqslant \vL_{\omega,\varepsilon}(\vu, B_r) \\ \notag & \leqslant \vL(\vu,B_r) + |\mathcal{L}_{\omega,\varepsilon}(\vu,B_r) - \mathcal{L}(\vu, B_r)| \\  & \leqslant \vL(u,B_r) + o_\varepsilon(\omega) + o(\varepsilon). 
\end{align}
Then,  for every $\varepsilon \in (0,1)$ the sequence $\{u_{\varepsilon,\omega} \}_\omega$ is uniformly bounded in $W^{1,\mu}(B_r)$, therefore, up to not relabelled subsequences, we may suppose that there exists $u_{\varepsilon}$ such that 
\begin{equation} \label{con}
    u_{\omega,\varepsilon} \rightharpoonup u_{\varepsilon} \text{\ \ weakly in $W^{1,\vm}(B_r)$} \quad \text{and} \quad u_{\varepsilon} - \vu \in W^{1,\vm}_0(B_r).
\end{equation}
Then, for $\omega \to 0$, inequality \eqref{con_l} implies
\begin{equation} \label{5.8}
    \int_{B_r} H(x,Du_\varepsilon) \, {\rm d}x + \sigma_\varepsilon \int_{B_r} |Du_\varepsilon|^q \, {\rm d}x + \sigma_\varepsilon \int_{B_r} |Du_\varepsilon|^s \, {\rm d}x \leqslant \vL(u, B_r) + o(\varepsilon),
\end{equation}
for every $\varepsilon \in (0,1)$. Hence, the sequence $\{|Du_\varepsilon|\log(1+|Du_\varepsilon|)\}_\varepsilon$ is uniformly bounded in $L^1(B_r)$, therefore by Dunford-Pettis theorem we can conclude that there exists $\Bar{u} \in W^{1,1}(B_r)$ such that
\begin{equation*}
    u_{\varepsilon} \rightharpoonup \Bar{u} \text{\ \ weakly in $W^{1,1}(B_r)$} \quad \text{and} \quad \Bar{u} - u \in W^{1,1}_0(B_r).
\end{equation*}
So, sending $\varepsilon \to 0$ and using weak lower semicontinuity, label \eqref{5.8} turns into $\vL(\Bar{u}, B_r) \leqslant \vL(u, B_r)$. Recalling that $u$ is the minimizer, we get $\vL(\Bar{u}, B_r) = \vL(u, B_r)$. The strict convexity of the functional $\mathcal{L}$ implies $u \equiv \Bar{u}$ in $B_r$. Therefore, we can conclude that
\begin{equation} \label{con2}
    u_\varepsilon \rightharpoonup u \quad \text{weakly in } W^{1,1}(B_r).
\end{equation}
{\it Step 2:} We rescale the functions $u_{\omega,\varepsilon}$ and $H_{\omega,\varepsilon}$ on $B_1(0).$ 
\\ \\ 
Let us fix $\omega \in (0,1]$ and $\varepsilon \in (0,1)$, and let $B_{\rho}(x_0) \Subset B_r$ be any ball. We blow $u_{\omega, \varepsilon}$ and $H_{\omega,\sigma_\varepsilon}$ on $B_{\rho}(x_0)$ by defining $$ u_{\rho}(x) \coloneqq u_{\omega, \varepsilon}(x_0 + \rho x)/{\rho},$$ 
$$ H_{\rho}(x,z) \coloneqq H_{\omega, \sigma_\varepsilon}(x_0+\rho x,z).$$
By {\it Step 1}, $u_{\rho} \in W^{1,\vm}(B_1(0))$ (with $\mu$ given by \eqref{mu}) is a local minimizer of
$$ W^{1,\vm}(B_1(0)) \ni w \mapsto \int_{B_1(0)} H_{\rho}(x,Dw) \, {\rm d}x.$$
Moreover, $u_\rho$ satisfies the Euler-Lagrange equation
\begin{equation} \label{el_rho}
    \int_{B_1(0)} \langle \partial_z H_{\rho}(x, Du_\rho), D\varphi \rangle \, {\rm d}x = 0,
\end{equation}
for all $\varphi \in W^{1,\vm}_0(B_1(0))$.
Let us observe that $z \mapsto \partial_{zz}H_\rho \in C(\R^n)$. By \eqref{h_4} - \eqref{h_lambda} we see that, for every $x,y \in B_1(0)$, for every $z, \xi \in \R^n$, the lagrangian $H_\rho(\cdot)$ satisfies
\begin{equation} \label{holder_h}
    |\partial_z H_\rho (x,z) -  \partial_z H_\rho (y,z)| \leqslant c \left (\rho^\alpha  |x - y|^\alpha(\ell_\omega(z))^{q-1} +\rho^\beta |x - y|^\beta(\ell_\omega(z))^{s-1} \right )  
\end{equation}
and
\begin{equation*}
    \lambda_\rho(x,|z|)|\xi|^2 \leqslant \langle \partial_{zz} H_\rho(x,z) \xi, \xi \rangle, \quad |\partial_{zz} H_\rho(x,z)| \leqslant |\Lambda_{\rho}(x,|z|)|, 
\end{equation*}
where $c \equiv c(n,q,s,||a||_{C^{0,\alpha}}, ||b||_{C^{0,\beta}})$ and $\lambda_\rho(x,|z|)\coloneqq \lambda_{\omega, \sigma_\varepsilon}(x_0 + \rho x,|z|)$, $\Lambda_\rho(x,|z|) \coloneqq \Lambda_{\omega, \sigma_\varepsilon}(x_0 + \rho x, |z|)$. 
\\ \\
{\it Step 3:} The frozen problem.
\\ \\
Let $ \v \in (0,1)$ be a fixed number and let $h \in \R^n \setminus \{ 0 \}$ be a vector such that
\begin{equation} \label{h}
    0 < |h| \leqslant \frac{1}{2^{8/\v}}.
\end{equation}
We take a ball $B_{h}\equiv B_{|h|^\v}(x_c)$, where $x_c \in B_{1/(2+2|h|^\v)}(0)$. For $\Bar{R}>0$ we use the following notation $$B_{\Bar{R}h}=B_{\Bar{R}|h|^{\zeta}}(x_c).$$ 
We observe that $B_{8h}  \Subset B_1(0)$. Now, consider the following  comparison problem
\begin{equation} \label{comparison}
    u_\rho + W^{1, \vm}_0(B_{8h}) \ni w \mapsto \int_{B_{8h}} H^{\iota}_\rho ( Dw) \, {\rm d}x,
\end{equation}
where $H^{\iota}_\rho(\cdot)$ is the frozen integral constructed according to the content of Section \ref{frozen}, i.e. 
$$H^{\iota}_\rho(z) =  \ell_{\omega}(z)\log(1+\ell_{\omega}(z)) + a^{\iota}_\rho (B_{8h})(\ell_{\omega}(z))^q + b^{\iota}_\rho(B_{8h})(\ell_{\omega}(z))^s,$$
where $$a^{\iota}_\rho (B_{8h}) \coloneqq \inf_{x \in B_{8h}}a(x_0 + \rho x) + \sigma_\varepsilon, \quad b^{\iota}_\rho(B_{8h}) \coloneqq \inf_{x \in B_{8h}}b(x_0 + \rho x) + \sigma_\varepsilon.$$
Observing that $w \mapsto \int_{B_{8h}} H^{\iota}_\rho ( Dw) \, {\rm d}x$ is strictly convex, we can apply again the Direct Methods of the Calculus of Variations to get the existence of an unique solution $v \in u_\rho + W^{1, \vm}_0(B_{8h})$ of \eqref{comparison}. The followig Euler-Lagrange equation is satisfied
\begin{equation} \label{euler_comparison}
\int_{B_{8h}} \langle H\ir(Dv) , D\varphi \rangle \, {\rm d}x =0, 
\end{equation}
for all $\varphi \in W^{1, \vm}_0(B_{8h})$.
Now, let us set
\begin{align*}
     \lambda^{\iota}_\rho(|z|) \coloneqq \frac{1}{1 + \ell_{\omega}(z)} & + q\min\{1,q-1\} a^\iota_\rho(B_{8h})(\vl(z))^{q-2} \\ & + s\min\{1,s-1\} b^\iota_\rho(B_{8h})(\vl(z))^{s-2},
\end{align*}
\begin{align*}
    \Lambda^{\iota}_\rho(|z|) & \coloneqq c(n)\left (\frac{\log(1+\vl(z))}{\vl(z)} + \frac{1}{1+\vl(z)} \right ) \\ & \quad + q\max\{1,q-1\} a^\iota_\rho(B_{8h})(\vl(z))^{q-2} \\ & \quad + s\max\{1,s-1\} b^\iota_\rho(B_{8h})(\vl(z))^{s-2}.
\end{align*}
We observe that $ z \mapsto \partial_{zz}H\ir(z) \in C(\R^n)$. Moreover, by \eqref{H_froz_4} and \eqref{H_froz_5} we have
\begin{equation*}
    \lambda\ir(|z|)|\xi|^2 \leqslant \langle \partial_{zz} H\ir(z) \xi, \xi \rangle, \quad 
     |\partial_{zz} H\ir(z)| \leqslant \Lambda\ir(|z|), 
\end{equation*}
\begin{equation*}
    \frac{\Lambda\ir(|z|)}{\lambda\ir(|z|)} \leqslant c(n,q,s)(\log(1+\vl(z))), \quad \text{for all } z \in \R^n : |z|\geqslant e-1,
\end{equation*}
and
\begin{equation} \label{blow1}
    \langle\partial_z H\ir (z_1) -  \partial_z H\ir (z_2), z_1 - z_2 \rangle \geqslant c(n,q,s)\mathcal{V}^2_{\rho, \iota}(z_1, z_2, B_{8h}),
\end{equation}
for all $z, z_1, z_2, \xi \in \R^n$, where 
\begin{align} \label{defV}
    \mathcal{V}^2_{\rho, \iota}(z_1, z_2, B_{8h}) \coloneqq |V_{1,1}(z_1)-V_{1,1}(z_2)|^2 &+ a^\iota_\rho(B_{8h})|V_{\omega,q}(z_1)-V_{\omega,q}(z_2)|^2  \notag \\ & +b^\iota_\rho(B_{8h})|V_{\omega,s}(z_1)-V_{\omega,s}(z_2)|^2.
\end{align}
Therefore, we can apply Theorem \ref{teorema3.1} with $ B_r = B_{8h}$, $u_0 = u_\rho$, $H\ios = H\ir$ and $v\ios = v$. Hence, for all $\delta \in (0,1)$, we get
    \begin{equation*} \label{estima_2v}
        ||Dv||_{L^{\infty}(B_{6h})} \leqslant c\left \{\left [ H_\rho^\iota(||Du_\rho||_{L^{\infty}(B_{8h})})\right]^{\delta} ||Du_\rho||_{L^{\infty}(B_{8h})} + 1 \right \},
    \end{equation*}
with $c \equiv c(n,q,s,||a||_{C^{0,\alpha}}, ||b||_{C^{0,\beta}}, \delta)$.
Let us pick
 $${m} \coloneqq \max \{ ||Du_\rho||_{L^{\infty}(B_{8h})}, e-1 \}.$$
Then, the above $L^{\infty}$-bound, applied with $\delta=\delta_1/\mu$, for some $\delta_1 \in (0,1)$, and \eqref{log} applied with $\epsilon = \mu$, imply
\begin{equation} \label{m}
     ||Dv||_{L^{\infty}(B_{6h})} \leqslant c  {m}^{1+\delta_1},
\end{equation}
with $c$ as above.
\\ \\
{\it Step 4:} We determine a comparison estimate for $u_\rho$ and $v$. More precisely, we prove that for some $\delta_2 \in (0,1/2)$ it holds
\begin{align} \label{comp}
      &  \int_{B_{8h}} \mathcal{V}^2_{\rho,\iota}(Du_\rho, Dv, B_{8h}) \, {\rm d}x \notag \\ & \qquad \leqslant  c M^{1-\frac{\delta_2}{2}}\left \{\rho^\alpha  |h|^{\zeta\alpha}   \int_{B_{8h}} (1 +|Du_\rho|)^{q-1 + \delta_2} \, {\rm d}x \right. \notag \\ & \qquad \quad \left. + \rho^\beta  |h|^{\zeta\beta}  \int_{B_{8h}} (1+|Du_\rho|)^{s-1 + \delta_2} \, {\rm d}x \right \},
\end{align}
 with $c\equiv c(n,q,s,||a||_{C^{0,\alpha}}, ||b||_{C^{0,\beta}}, \delta_2)$, and some constant $M>0$.
\\ \\
Let us start considering a positive constant
$${M} \geqslant \max \left \{ ||\Tilde{E}_{\omega,\sigma}(x_0 + \rho x,Du_\rho)||_{L^{\infty}(B_{8h})}, ||\Tilde{E}_{\omega,\sigma}(x_0 + \rho x,e-1)||_{L^{\infty}(B_{8h})} \right \},$$
then
\begin{align} \label{1}
    a^{\iota}_{\sigma}(B_{8h}) (\vl({m}))^q, b^{\iota}_{\sigma}(B_{8h}) (\vl({m}))^s \leqslant {M}\,,
\end{align}
and
\begin{align} \label{2}
   {m}\leqslant \vl({m}) \leqslant {M}.
\end{align}
We set $\mathcal{V}^2_{\rho,\iota} \equiv \mathcal{V}^2_{\rho,\iota}(Du_\rho, Dv, B_{8h})$, then
\begin{align*}
         \int_{B_{8h}} \mathcal{V}^2_{\rho,\iota} \, {\rm d}x & \mathrel{\eqmathbox{\overset{\mathrm{\eqref{blow1}}}{\leqslant}}} c \int_{B_{8h}} \langle \partial H\ir(Du_\rho) - \partial H\ir(Dv), Du_\rho - Dv \rangle \, {\rm d}x \\ & \mathrel{\eqmathbox{\overset{\mathrm{\eqref{euler_comparison}}}{=}}} c \int_{B_{8h}} \langle \partial
      H\ir(Du_\rho), Du_\rho - Dv \rangle \, {\rm d}x \\ & \mathrel{\eqmathbox{\overset{\mathrm{\eqref{el_rho}}}{=}}} c \int_{B_{8h}} \langle \partial
      H\ir(Du_\rho) -  H_\rho(x, Du_\rho), Du_\rho - Dv \rangle \, {\rm d}x \\ & \mathrel{\eqmathbox{\overset{\mathrm{\eqref{holder_h}}}{\leqslant}}} c \int_{B_{8h}}  \rho^\alpha  |h|^{\zeta\alpha}(\ell_\omega(Du_\rho))^{q-1}(|Du_\rho| +|Dv|) \, {\rm d}x
      \\ & \qquad \qquad + c \int_{B_{8h}} \rho^\beta |h|^{\zeta\beta}(\ell_\omega(Du_\rho))^{s-1}(|Du_\rho| +|Dv|) \, {\rm d}x \\ & \mathrel{\eqmathbox{\eqcolon}} \text{I} + \text{II}.
\end{align*}
We estimate I and II. For the first term we use growth condition from below, minimality of $v$ and \eqref{log} with $\epsilon =\delta_2$, for some $\delta_2 \in (0,1/2)$. Then,
\begin{align*}
 \text{I} & \leqslant  c\rho^\alpha  |h|^{\zeta\alpha} (\ell_\omega({m}))^{q-1}\int_{B_{8h}} H^\iota_\rho(Du_\rho) \, {\rm d}x \\
 &   \leqslant c\rho^\alpha  |h|^{\zeta\alpha} (\ell_\omega({m}))^{q-1}\int_{B_{8h}} \vl(Du_\rho)^{1+\frac{\delta_2}{2}} \, {\rm d}x \\ 
 &   \quad  + c\rho^\alpha  |h|^{\zeta\alpha} (\ell_\omega({m}))^{q-1}\int_{B_{8h}} a\ir(B_{8h})(\vl(|Du_\rho|))^q  \, {\rm d}x \\ & 
 \quad  +c \rho^\alpha  |h|^{\zeta\alpha} (\ell_\omega({m}))^{q-1}\int_{B_{8h}} b\ir(B_{8h})(\vl(|Du_\rho|))^s \, {\rm d}x \\ 
 & \eqcolon \text{I}_1 +  \text{I}_2+  \text{I}_3. 
\end{align*}
Let us estimate the three terms in the right hand side above. By \eqref{2}, we have
\begin{align*}
\text{I}_1 &  = c\rho^\alpha  |h|^{\zeta\alpha}(\ell_\omega({m}))^{q-1}\int_{B_{8h}} (\vl(Du_\rho))^{2-q-\frac{\delta_2}{2}}(\vl(Du_\rho))^{q-1+\delta_2} \, {\rm d}x \\ &  \leqslant  c \rho^\alpha  |h|^{\zeta\alpha}(\ell_\omega({m}))^{1-\frac{\delta_2}{2}}\int_{B_{8h}} (\vl(Du_\rho))^{q-1 + \delta_2} \, {\rm d}x \\ & \leqslant c\rho^\alpha  |h|^{\zeta\alpha}{M}^{1-\frac{\delta_2}{2}} \int_{B_{8h}} (\vl(Du_\rho))^{q-1 + \delta_2} \, {\rm d}x.
\end{align*}
Now let us move on with
\begin{align*}
\text{I}_2 & =  c\rho^\alpha  |h|^{\zeta\alpha}(\ell_\omega({m}))^{q-1} a\ir(B_{8h})\int_{B_{8h}} (\vl(Du_\rho))^{1-\delta_2}(\vl(Du_\rho))^{q-1+\delta_2} \, {\rm d}x \\ &  \leqslant  c\rho^\alpha  |h|^{\zeta\alpha}(a\ir(B_{8h}))^{1-\frac{\delta_2}{q}}(\ell_\omega({m}))^{q(1-\frac{\delta_2}{q})} \int_{B_{8h}} (\vl(|Du_\rho|))^{q-1+\delta_2} \, {\rm d}x   \\ &  \leqslant c\rho^\alpha  |h|^{\zeta\alpha}M^{1-\frac{\delta_2}{q}} \int_{B_{8h}} (\vl(|Du_\rho|))^{q-1+\delta_2} \, {\rm d}x,
\end{align*}
where we used both \eqref{1} and \eqref{2}. In a similar way for $\text{I}_3$ we have
\begin{align*}
    \text{I}_3 & = c\rho^\alpha  |h|^{\zeta\alpha} (\ell_\omega({m}))^{q-1} b\ir(B_{8h})\int_{B_{8h}}(\vl(|Du_\rho|))^{s-q+1-\delta_2}(\vl(|Du_\rho|))^{q-1+\delta_2} \, {\rm d}x \\ & \leqslant c \rho^\alpha  |h|^{\zeta\alpha}(b\ir(B_{8h}))^{1-\frac{\delta_2}{s}}(\vl(m))^{s(1-\frac{\delta_2}{s})}\int_{B_{8h}}(\vl(|Du_\rho|))^{q-1+\delta_2} \, {\rm d}x \\ & \leqslant c\rho^\alpha  |h|^{\zeta\alpha}M^{1-\frac{\delta_2}{s}} \int_{B_{8h}} (\vl(|Du_\rho|))^{q-1+\delta_2} \, {\rm d}x\,.
\end{align*}
 Then, observing that $q,s<2$, we get
\begin{align*}
    \text{I}  \leqslant c \rho^\alpha  |h|^{\zeta\alpha} M^{1-\frac{\delta_2}{2}}  \int_{B_{8h}} (\vl(Du_\rho))^{q-1 + \delta_2} \, {\rm d}x.
\end{align*}
In similar fashion,
\begin{align*}
    \text{II}  \leqslant c \rho^\beta  |h|^{\zeta\beta} M^{1-\frac{\delta_2}{2}}  \int_{B_{8h}} (\vl(Du_\rho))^{s-1 + \delta_2} \, {\rm d}x.
\end{align*}
Hence, we can write
\begin{align*}
  &  \int_{B_{8h}} \mathcal{V}^2_{\rho,\iota}(Du_\rho, Dv, B_{8h}) \, {\rm d}x \\ & \qquad \leqslant  c M^{1-\frac{\delta_2}{2}}\left \{\rho^\alpha  |h|^{\zeta\alpha}   \int_{B_{8h}} (\vl(Du_\rho))^{q-1 + \delta_2} \, {\rm d}x \right. \\ & \quad \qquad \left. + \rho^\beta  |h|^{\zeta\beta}  \int_{B_{8h}} (\vl(Du_\rho))^{s-1 + \delta_2} \, {\rm d}x \right \},
\end{align*}
this, together with the very definition of $\vl$ leads to \eqref{comp}.
\\ \\
{\it Step 5:} We prove that the following fractional Caccioppoli inequality holds
\begin{align} \label{step5}
   & \left ( \fint_{B_{\rho/2}(x_0)} (E_{\omega,\sigma_\varepsilon}(x,|Du_{\omega,\varepsilon}|)-k)_{+}^{2\chi} \, {\rm d}x\right )^{\frac{1}{\chi}}  \notag \\ & \qquad \quad + \rho^{2\gamma -n}[(E_{\omega,\sigma_\varepsilon}(\cdot, |Du_{\omega,\varepsilon}|)-k)_+]^2_{\gamma,2,B_{\rho/2}(x_0)} \notag \\ & \qquad \leqslant M^{\delta_1}\fint_{B_\rho(x_0)} (E_{\omega,\sigma_\varepsilon}(x,|Du_{\omega,\varepsilon} |) -k)_+^2 \, {\rm d}x \notag \\ & \qquad \quad + cM^{2(1-\delta_2)}\rho^{2\alpha}\fint_{B_\rho(x_0)}(1+|Du_{\omega,\varepsilon}|)^{2(q-1+\delta_2)} \, {\rm d}x \notag \\ & \qquad \quad +cM^{2(1-\delta_2)}\rho^{2\beta}\fint_{B_\rho(x_0)}(1+|Du_{\omega,\varepsilon}|)^{2(s-1+\delta_2)} \, {\rm d}x \notag \\ & \qquad \quad + cM^{2+q\delta_1 -\frac{\delta_2}{2}} \rho^\alpha \fint_{B_\rho(x_0)} (1+|Du_{\omega,\varepsilon}|)^{q-1+\delta_2} \, {\rm d}x \notag \\ & \qquad \quad + cM^{2+s\delta_1 -\frac{\delta_2}{2}} \rho^\beta \fint_{B_\rho(x_0)} (1+|Du_{\omega,\varepsilon}|)^{s-1+\delta_2} \, {\rm d}x,
\end{align}
 where $B_\rho(x_0) \Subset B_r$, $\delta_1 \in (0,1), \delta_2 \in (0,1/2)$, $\gamma \in (0,\alpha_0)$, $\alpha_0 \coloneqq \frac{
\min\{\alpha,\beta\}}{\min\{\alpha,\beta\} +2}$, $\chi \coloneqq \frac{n}{n-2\gamma}$ and $c \equiv c(n,q,s,||a||_{C^{0,\alpha}}, ||b||_{C^{0,\beta}},  \delta_1,\delta_2, \gamma) $.
\\ \\
We start observing that
\begin{align*}
&  \int_{B_h} |\tau_h(E_{\rho}(x,|Du_\rho|) -k)_+|^2 \, {\rm d}x\\
& \leqslant c  \int_{B_h} |\tau_h(E\ir(|Dv|)-k)_+|^2 \, {\rm d}x +c\int_{B_{4h}}|E\ir(|Du_\rho|) - E_\rho(x,|Du_\rho|)|^2 \, {\rm d}x \\
& \quad + c\int_{B_{4h}}|E\ir(|Dv|)-E\ir(|Du_\rho|)|^2 \, {\rm d}x \\ &  \eqcolon \overline{\text{I}} + \overline{\text{II}} + \overline{\text{III}}.
\end{align*}
Let us estimate the three terms in the right hand side above. For $\overline{\text{I}}$, we recall the following property of difference quotients
\begin{equation} \label{dq}
    \int_{B_h} |\tau_h (E\ir(|Dv|)-k)_+|^2 \, {\rm d}x \leqslant |h|^{2}\int_{B_{2h}}|D(E\ir(|Dv|)-k)_+|^2 \, {\rm d}x.
\end{equation}
Then, \eqref{dq} and \eqref{bm} applied with $\rho = 4|h|^\zeta$, imply
\begin{eqnarray} \label{tau}
 \overline{\text{I}} \notag & \leqslant &\! c|h|^{2(1-\zeta)}\log(1+\vl(||Dv||_{{L^\infty}(B_{4h})}))\int_{B_{4h}}(E\ir(|Dv|)-k)_+^2 \, {\rm d}x \notag \\* & \! \leqslant & c|h|^{2(1-\zeta)}m^{\delta_1}\int_{B_{4h}}(E\ir(|Dv|)-k)_+^2 \, {\rm d}x,
\end{eqnarray}
where $c\equiv c(n,q,s,||a||_{C^{0,\alpha}}, ||b||_{C^{0,\beta}}, \delta_1)$; for the last inequality we used \eqref{log} with $\epsilon = \delta_1/2$ and \eqref{m}, note that $(1+\delta_1)\delta_1/2 \leqslant \delta_1$. For the second term, we have
\begin{eqnarray} \label{e1}
  \overline{\text{II}} & \overset{\eqref{3.10}}{\leqslant} & c\rho^{2\alpha}|h|^{2\zeta\alpha}\int_{B_{4h}}(\vl(Du_\rho))^{2q} \, {\rm d}x \notag +  c\rho^{2\beta}|h|^{2\zeta\beta}\int_{B_{4h}}(\vl(Du_\rho))^{2s} \, {\rm d}x \notag \\  & \overset{\eqref{2}}{\leqslant}& c\rho^{2\alpha}|h|^{2\zeta \alpha}M^{2(1-\delta_2)} \int_{B_{4h}}(1+|Du_\rho|)^{2(q-1+\delta_2)} \, {\rm d}x \notag \\ &&   + c\rho^{2\beta}|h|^{2\zeta \beta}M^{2(1-\delta_2)} \int_{B_{4h}}(1+|Du_\rho|)^{2(s-1+\delta_2)} \, {\rm d}x.
\end{eqnarray}
Finally, 
\begin{eqnarray*} 
 \overline{\text{III}} &\mathrel{\eqmathbox{\overset{\mathrm{\eqref{in1}}}{\leqslant}}}&  c \int_{B_{4h}} |Du_\rho-Dv|^2 \, {\rm d}x \notag \\ &&  + c(a\ir(B_{8h}))^2 \int_{B_{4h}}\left[\omega^2+|Du_\rho|^2+|Dv|^2 \right ]^{q-1}|Du_\rho - Dv|^2 \, {\rm d}x \notag \\ && + c(b\ir(B_{8h}))^2 \int_{B_{4h}}\left[\omega^2 + |Du_\rho|^2+|Dv|^2 \right ]^{s-1}|Du_\rho - Dv|^2 \, {\rm d}x \notag \\ & \mathrel{\eqmathbox{\overset{\mathrm{\eqref{v},\eqref{m}}}{\leqslant}}}&  c 
m^{1+\delta_1}\int_{B_{4h}}|V_{1,1}(Du_\rho) - V_{1,1}(Dv)|^2 \, {\rm d}x \notag \\ &&  + c(a^{\iota}_\rho(B_{8h}))^2m^{q(1+\delta_1)} \int_{B_{4h}}(\omega^2+|Du_\rho|^2+|Dv|^2 )^{\frac{q-2}{2}}|Du_\rho -Dv|^2 \, {\rm d}x \notag \\ && + c(b^{\iota}_\rho(B_{8h}))^2m^{s(1+\delta_1)} \int_{B_{4h}}(\omega^2+|Du_\rho|^2+|Dv|^2 )^{\frac{s-2}{2}}|Du_\rho -Dv|^2 \, {\rm d}x.
\end{eqnarray*}
We use again \eqref{v} and \eqref{m} to the above content, obtaining
\begin{eqnarray} \label{e2} 
 \overline{\text{III}} & \mathrel{\eqmathbox{\leqslant}}& c 
m^{1+\delta_1}\int_{B_{4h}}|V_{1,1}(Du_\rho) - V_{1,1}(Dv)|^2 \, {\rm d}x \notag \\  &&  + c\left [a^{\iota}_\rho(B_{8h})(\vl(m))^q\right ]m^{q\delta_1} \int_{B_{4h}}a^{\iota}_\rho(B_{8h})|V_{\omega,q}(Du_\rho) - V_{\omega,q}(Dv)|^2 \, {\rm d}x \notag \\  &&  + c \left [b^{\iota}_\rho(B_{8h})(\vl(m))^s \right ]m^{s\delta_1} \int_{B_{4h}}b^{\iota}_\rho(B_{8h})|V_{\omega,s}(Du_\rho) - V_{\omega,s}(Dv)|^2 \, {\rm d}x \notag \\ & \mathrel{\eqmathbox{\overset{\mathrm{\eqref{defV},\eqref{1}}}{\leqslant}}}& c \left (m^{1+\delta_1} + Mm^{q\delta_1}+ Mm^{s\delta_1} \right ) \int_{B_{4h}} \mathcal{V}^2_{\rho,\iota}(Du_\rho, Dv, B_{8h}) \, {\rm d}x \notag \\ & \mathrel{\eqmathbox{\overset{\mathrm{\eqref{2}}}{\leqslant}}}& c\left ( M^{1+q\delta_1}+M^{1+s\delta_1} \right ) \int_{B_{4h}}\mathcal{V}^2_{\rho,\iota}(Du_\rho, Dv, B_{8h}) \, {\rm d}x \notag \\ & \mathrel{\eqmathbox{\overset{\mathrm{\eqref{comp}}}{\leqslant}}}&  c\left ( M^{1+q\delta_1}+M^{1+s\delta_1} \right ) M^{1-\frac{\delta_2}{2}} \notag \\ &&  \times \left \{\rho^\alpha  |h|^{\zeta\alpha}   \int_{B_{4h}} (1 +|Du_\rho|)^{q-1 + \delta_2} \, {\rm d}x + \rho^\beta  |h|^{\zeta\beta}  \int_{B_{4h}} (1+|Du_\rho|)^{s-1 + \delta_2} \, {\rm d}x \right \},\notag
\end{eqnarray}
where $c \equiv c(n,q,s,||a||_{C^{0,\alpha}}, ||b||_{C^{0,\beta}},  \delta_1,\delta_2)$. Now, we choose $\zeta = \frac{2}{\min\{\alpha,\beta\} + 2}$ and we set $\alpha_0 = \frac{\min\{\alpha,\beta\}}{\min\{\alpha,\beta\}+2}$, then
\begin{align} \label{a_0}
     \alpha\zeta, \beta\zeta \leqslant 2\alpha_0  \quad \text{and} \quad 2\alpha_0 = 2(1-\zeta).
\end{align} 
Therefore, by \eqref{2} and \eqref{tau} - \eqref{a_0}, we get
\begin{align} \label{inq}
   & \int_{B_h} |\tau_h(E_{\rho}(x,|Du_\rho|) -k)_+|^2 \, {\rm d}x  \notag  \\ & \qquad \leqslant c|h|^{2\alpha_0}M^{\delta_1}\int_{B_{4h}}(E_\rho(x,|Du_\rho|)-k)_+^2 \, {\rm d}x \notag \\ & \qquad \quad  +c\rho^{2\alpha}|h|^{2\alpha_0}M^{2(1-\delta_2)} \int_{B_{4h}}(1+|Du_\rho|)^{2(q-1+\delta_2)} \, {\rm d}x \notag  \\ & \qquad \quad+ c\rho^{2\beta}|h|^{2\alpha_0}M^{2(1-\delta_2)} \int_{B_{4h}}(1+|Du_\rho|)^{2(s-1+\delta_2)} \, {\rm d}x  \notag \\ & \qquad \quad + c\left ( M^{1+(1+q)\delta_1}+M^{1+(1+s)\delta_1} \right ) M^{1-\frac{\delta_2}{2}} \notag \\ & \qquad  \quad  \times \left \{\rho^\alpha  |h|^{2\alpha_0}   \int_{B_{4h}} (1 +|Du_\rho|)^{q-1 + \delta_2} \, {\rm d}x \right. \notag \\ & \qquad \qquad \left. + \rho^\beta  |h|^{2\alpha_0}  \int_{B_{4h}} (1+|Du_\rho|)^{s-1 + \delta_2} \, {\rm d}x \right \},
\end{align}
with $c \equiv c(n,q,s,||a||_{C^{0,\alpha}}, ||b||_{C^{0,\beta}},  \delta_1,\delta_2)$. We now proceed with a covering argument: we take a lattice of cubes $\{Q_i\}_{i \leqslant \mathcal{I}}$ centered at points $\{ x_i\}_{i \leqslant \mathcal{I}} \subset B_{1/2 + 2|h|^\zeta}(0)$, with sidelength equal to $2|h|^\zeta / \sqrt{n}$, with sides parallel to the coordinate axes, and such that
\begin{equation} \label{balls}
    \Big | B_{1/2}(0) \setminus \bigcup_{i \leqslant \mathcal{I}} Q_i\Big |=0, \quad Q_{i_1} \cap Q_{i_2} = \emptyset \quad \text{if and only if} \quad i_1 \neq i_2.
\end{equation}
Defining $B_i \coloneqq B_{|h|^\zeta}(x_i)$, by construction and by \eqref{h}, we have 
$$ B_{8i} \Subset B_1(0), \quad \text{for all} \quad i \leqslant \mathcal{I} \quad \text{and} \quad \mathcal{I} \sim c(n) \frac{1}{|h|^{n\zeta}}.$$
By \eqref{balls}, each of the dilatated balls $B_{8i_t}$ intersects the similar ones $B_{8i_{\Tilde{t}}}$ fewer than $c_{\mathcal{I}}$ times; therefore the family $\{B_{8i}\}$ has the uniform finite intersection property that implies
\begin{equation} \label{sum}
    \sum_{i=1}^{\mathcal{I}} \lambda(B_{8i}) = c_{\mathcal{I}}\lambda (B_1(0)).
\end{equation}
Now, we sum up inequalities \eqref{inq} for $i \leqslant \mathcal{I}$ and $B_h \equiv B_i$, obtaining
\begin{align*}
   & \int_{B_{1/2}(0)} |\tau_h(E_{\rho}(x,|Du_\rho|) -k)_+|^2 \, {\rm d}x \notag \\ & \qquad \leqslant c|h|^{2\alpha_0}M^{\delta_1} \sum_{i=1}^\mathcal{I} \int_{B_{8i}}(E_\rho(x,|Du_\rho|)-k)_+^2 \, {\rm d}x \notag \\ & \qquad \quad  + c\rho^{2\alpha}|h|^{2\alpha_0}M^{2(1-\delta_2)} \sum_{i=1}^\mathcal{I} \int_{B_{8i}}(1+|Du_\rho|)^{2(q-1+\delta_2)} \, {\rm d}x \notag  \\ & \qquad \quad +  c\rho^{2\beta}|h|^{2\alpha_0}M^{2(1-\delta_2)} \sum_{i=1}^\mathcal{I} \int_{B_{8i}}(1+|Du_\rho|)^{2(s-1+\delta_2)} \, {\rm d}x  \notag \\ & \qquad \quad + c\left ( M^{1+(1+q)\delta_1}+M^{1+(1+s)\delta_1} \right ) M^{1-\frac{\delta_2}{2}} \times \notag \\ & \qquad  \quad  \times \sum_{i=1}^\mathcal{I} \left \{\rho^\alpha  |h|^{2\alpha_0}   \int_{B_{8i}} (1 +|Du_\rho|)^{q-1 + \delta_2} \, {\rm d}x + \rho^\beta  |h|^{2\alpha_0}  \int_{B_{8i}} (1+|Du_\rho|)^{s-1 + \delta_2} \, {\rm d}x \right \}. 
\end{align*}
Using \eqref{sum} to the above display we get
\begin{align*}
 & \int_{B_{1/2}(0)} |\tau_h(E_{\rho}(x,|Du_\rho|) -k)_+|^2 \, {\rm d}x \notag \\ & \qquad \leqslant
c_{\mathcal{I}}c|h|^{2\alpha_0}M^{\delta_1}\int_{B_1(0)}(E_\rho(x,|Du_\rho|)-k)_+^2 \, {\rm d}x \notag \\ & \qquad \quad  + c_{\mathcal{I}}c\rho^{2\alpha}|h|^{2\alpha_0}M^{2(1-\delta_2)} \int_{B_{1}(0)}(1+|Du_\rho|)^{2(q-1+\delta_2)} \, {\rm d}x \notag  \\ & \qquad \quad +  c_{\mathcal{I}}c\rho^{2\beta}|h|^{2\alpha_0}M^{2(1-\delta_2)} \int_{B_{1}(0)}(1+|Du_\rho|)^{2(s-1+\delta_2)} \, {\rm d}x  \notag \\ & \qquad \quad +  c_{\mathcal{I}}c\left ( M^{1+(1+q)\delta_1}+M^{1+(1+s)\delta_1} \right ) M^{1-\frac{\delta_2}{2}} \notag \\ & \qquad  \quad  \times \left \{\rho^\alpha  |h|^{2\alpha_0}   \int_{B_{1}(0)} (1 +|Du_\rho|)^{q-1 + \delta_2} \, {\rm d}x \right. \notag \\ & \qquad \qquad   \left. + \rho^\beta  |h|^{2\alpha_0}  \int_{B_{1}(0)} (1+|Du_\rho|)^{s-1 + \delta_2} \, {\rm d}x \right \}.
\end{align*}
We can now apply Lemma \ref{2.3} and deduce that
$$ (E_\rho(\cdot, |Du_\rho|)-k)_+ \in W^{\gamma, 2}(B_{1/2}(0)),$$ for all $\gamma \in (0,\alpha_0)$. Moreover, we have the following bound
\begin{align*}
   & ||E_\rho(\cdot, |Du_\rho|)-k)_+ ||_{L^{2\chi}(B_{1/2}(0))} + [(E_\rho(\cdot, |Du_\rho|) -k)_+]_{\gamma,2,B_{1/2}(0)} \notag \\ & \qquad \leqslant cM^{\frac{\delta_1}{2}}||E_\rho(\cdot, |Du_\rho|) - k)_+||_{L^2(B_1(0))} \notag \\ & \qquad \quad + cM^{1-\delta_2} \rho^\alpha || 1+ |Du_\rho| \ ||_{L^{2(q-1 + \delta_2)}(B_1(0))}^{q-1 + \delta_2} \\ & \qquad \quad + cM^{1-\delta_2} \rho^\beta  || 1+ |Du_\rho| \ ||_{L^{2(s-1 + \delta_2)}(B_1(0))}^{s-1 + \delta_2} \notag \\ & \qquad \quad +c M^{1+(1+q)\frac{\delta_1}{2} - \frac{\delta_2}{4}} \rho^{\frac{\alpha}{2}}  || 1+ |Du_\rho| \ ||_{L^{q-1 + \delta_2}(B_1(0))}^{(q-1 + \delta_2)/2}  \notag \\ & \qquad \quad + cM^{1+(1+s)\frac{\delta_1}{2} - \frac{\delta_2}{4}} \rho^{\frac{\beta}{2}}  || 1+ |Du_\rho| \ ||_{L^{s-1 + \delta_2}(B_1(0))}^{(s-1 + \delta_2)/2},
\end{align*}
for some $c \equiv c(n,q,s,||a||_{C^{0,\alpha}}, ||b||_{C^{0,\beta}},  \delta_1,\delta_2, \gamma) $, with $\chi = \frac{n}{n-2\gamma}$. Then, scaling back in the previous display, squaring and restoring the original notation, we obtain \eqref{step5}.
\\ \\ 
{\em Step 6:}  In this step we show that the following $L^{\infty}$-bounds holds
\begin{align*}
      &  || \Tilde{E}\ose(\cdot, |Du\oe|)||_{L^{\infty}(B_{d})} \notag \\ & \qquad  \leqslant \frac{c}{(R-d)^{n\theta}}||\Tilde{E}\ose(\cdot, |Du\oe|)||^{\theta}_{L^1(B_R)} + ||Du\oe||^{\theta}_{L^1(B_R)} + c,
\end{align*}
whenever $B_d \Subset B_{R}\subseteq B_r$ are concentric balls, with $c \equiv c(n,q,s,||a||_{C^{0,\alpha}}, ||b||_{C^{0,\beta}})$ and $\theta \equiv \theta(n,q,s,\alpha,\beta) \geqslant 1$.
\\ \\
We start taking $\delta_2$ as
\begin{equation} \label{delta_2}
    \delta_2 \coloneqq \frac{1}{2} \min \left \{ 1+\frac{\alpha}{n} -q, 1+\frac{\beta}{n} -s\right \},
\end{equation}
we observe that by \eqref{q_s} $\delta_2 \in (0,1/2)$. We consider concentric balls $B_d \Subset B_{d_1} \Subset B_{d_2} \Subset B_R \subseteq B_r$ and we set $r_0 \coloneqq (d_2 - d_1)/8$. Then, $B_{2r_0}(x_0) \Subset B_{d_2}$, for every $x_0 \in B_{d_1}$. Without loss of generality, we can assume that
\[
    ||\Tilde{E}\ose(\cdot,| Du\oe|)||_{L^{\infty}(B_d)} \geqslant e-1.
\]
By \eqref{holder}, every point is a Lebesgue point for $|Du\oe|$ and therefore also for $E\ose(\cdot, |Du\oe|)$. Now, by \eqref{step5} used on $B_\rho(x_0) \subset B_{r_0}(x_0)$ and Lemma \ref{revlem} applied on $B_{r_0}(x_0)$ with the following choice:
\begin{align*}
    & v\coloneqq E\ose(\cdot, |Du\oe|), \ 
    M \coloneqq  ||\Tilde{E}\ose(\cdot,| Du\oe|)||_{L^{\infty}(B_{d_2})}, \ \gamma \coloneqq \frac{\alpha_0}{2}, \ k_0 \coloneqq 0 \\ & L_0 \coloneqq M^{\frac{\delta_1}{2}}, \ L_1 \equiv L_2 \coloneqq M^{1-\delta_2}, \ L_3  \coloneqq  M^{1+(1+q)\frac{\delta_1}{2} - \frac{\delta_2}{4}}, \ L_4 \coloneqq M^{1+(1+s)\frac{\delta_1}{2} - \frac{\delta_2}{4}}, \\ &\sigma_1 \coloneqq \alpha, \ \sigma_2 \coloneqq \beta, \ \sigma_3 \coloneqq \frac{\alpha}{2}, \ \sigma_4 \coloneqq \frac{\beta}{2}, \ c_* \equiv c_*(n,q,s,||a||_{C^{0,\alpha}}, ||b||_{C^{0,\beta}},  \delta_1), \\ & f_1 = f_2 = f_3 = f_4
    \coloneqq 1+|Du\oe|, \ \theta_1 = \theta_2 = \theta_3 = \theta_4 \coloneqq 1, \\ & l_1  \coloneqq 2(q-1+\delta_2), \ l_2 \coloneqq 2(s-1+\delta_2), \ l_3 \coloneqq q-1+\delta_2, \ l_4 \coloneqq s-1+\delta_2,
\end{align*}
we have
\begin{align*}
   & || E\ose(\cdot, |Du\oe|)||_{L^{\infty}(B_{d_1})} \\ & \qquad \leqslant \frac{cM^{\frac{\delta_1 \chi}{2(\chi -1)}}}{(d_2 - d_1)^{n/2}} \left ( \int_{B_{d_2}} (E\ose(x,|Du\oe|) )^2  \, {\rm d}x \right )^{\frac{1}{2}} \\ & \qquad \quad + cM^{\frac{\delta_1}{2(\chi -1)} + 1-\delta_2}||P^{2(q-1 + \delta_2),1}_{2,\alpha}(1+|Du\oe|, \cdot, (d_2 - d_1)/4)||_{L^{\infty}(B_{d_1})}  \\ & \qquad \quad + cM^{\frac{\delta_1}{2(\chi -1)} + 1-\delta_2}||P^{2(s-1 + \delta_2),1}_{2,\beta}(1+|Du\oe|, \cdot, (d_2 - d_1)/4)||_{L^{\infty}(B_{d_1})}  \\ & \qquad \quad + cM^{\frac{\delta_1}{2(\chi -1)} + 1 +(1+q)\frac{\delta_1}{2} -\frac{\delta_2}{4}}||P^{q-1 + \delta_2,1}_{2,\alpha/2}(1+|Du\oe|, \cdot, (d_2 - d_1)/4)||_{L^{\infty}(B_{d_1})} \\ & \qquad \quad + cM^{\frac{\delta_1}{2(\chi -1)} + 1 +(1+s)\frac{\delta_1}{2} -\frac{\delta_2}{4}}||P^{s-1 + \delta_2,1}_{2,\beta/2}(1+|Du\oe|, \cdot, (d_2 - d_1)/4)||_{L^{\infty}(B_{d_1})},
\end{align*}
with $c \equiv c(n,q,s,||a||_{C^{0,\alpha}}, ||b||_{C^{0,\beta}},  \delta_1) $; we used the fact that $x_0 \in B_{d_1}$ is an arbitrary point. Now, the inequality in the last display together with \eqref{tilde_e_1} led to
\begin{align} \label{5.55}
    & || \Tilde{E}\ose(\cdot, |Du\oe|)||_{L^{\infty}(B_{d_1})} \notag \\ & \qquad \leqslant \frac{c}{(d_2 - d_1)^{n/2}}||\Tilde{E}\ose(\cdot, |Du\oe|)||^{\frac{\delta_1 \chi}{2(\chi -1)} + \frac{1}{2}}_{L^{\infty}(B_{d_2})} \left ( \int_{B_R}\Tilde{E}\ose(x, |Du\oe|) \, {\rm d}x \right )^{\frac{1}{2}}  \notag \\ & \qquad \quad + c||\Tilde{E}\ose(\cdot, |Du\oe|)||_{L^{\infty}(B_{d_2})}^{\frac{\delta_1}{2(\chi -1)} + 1-\delta_2} \notag  \\ & \qquad \qquad \times ||P^{2(q-1 + \delta_2),1}_{2,\alpha}(1+|Du\oe|, \cdot, (d_2 - d_1)/4)||_{L^{\infty}(B_{d_1})} \notag \\ & \qquad \quad + c||\Tilde{E}\ose(\cdot, |Du\oe|)||_{L^{\infty}(B_{d_2})}^{\frac{\delta_1}{2(\chi -1)} + 1-\delta_2} \notag \\ & \qquad \qquad \times ||P^{2(s-1 + \delta_2),1}_{2,\beta}(1+|Du\oe|, \cdot, (d_2 - d_1)/4)||_{L^{\infty}(B_{d_1})} \notag \\ & \qquad \quad + c||\Tilde{E}\ose(\cdot, |Du\oe|)||_{L^{\infty}(B_{d_2})}^{\frac{\delta_1}{2(\chi -1)} + 1 +(1+q)\frac{\delta_1}{2} -\frac{\delta_2}{4}} \notag \\ & \qquad \qquad \times ||P^{q-1 + \delta_2,1}_{2,\alpha/2}(1+|Du\oe|, \cdot, (d_2 - d_1)/4)||_{L^{\infty}(B_{d_1})} \notag \\ & \qquad \quad + c||\Tilde{E}\ose(\cdot, |Du\oe|)||^{\frac{\delta_1}{2(\chi -1)}+ 1 +(1+s)\frac{\delta_1}{2} -\frac{\delta_2}{4}}_{L^{\infty}(B_{d_2})} \notag \\ & \qquad \qquad \times||P^{s-1 + \delta_2,1}_{2,\beta/2}(1+|Du\oe|, \cdot, (d_2 - d_1)/4)||_{L^{\infty}(B_{d_1})} + c.
\end{align}
Let us choose $\delta_1 \in (0,1)$ such that
\begin{equation} \label{delta_1}
    \begin{cases}
       &  \frac{\delta_1 \chi}{2(\chi -1)} + \frac{1}{2} <1 \\ & \frac{\delta_1}{2(\chi -1)} + 1-\delta_2 < 1 \\ &  \frac{\delta_1}{2(\chi -1)} + 1 +(1+q)\frac{\delta_1}{2} -\frac{\delta_2}{4} <  1 \\ & \frac{\delta_1}{2(\chi -1)}+ 1 +(1+s)\frac{\delta_1}{2} -\frac{\delta_2}{4} < 1.
    \end{cases}
\end{equation}
From now on, without loss of generality, we assume that
\begin{align*} 
    n>2\alpha, \quad n>2\beta,
\end{align*}
indeed, since we have strict inequalities in \eqref{q_s}, we can take $\alpha, \beta < 1$.
By the choice of $\delta_2$ in \eqref{delta_2} we have
$$ \max \left \{ \frac{n(q-1+\delta_2)}{\alpha}, \frac{n(s-1+\delta_2)}{\beta}\right \} < 1,$$
so we can apply Lemma \ref{lem2.4} with $\gamma =1$ and obtain
\begin{equation*}
    \begin{cases}
       & ||P^{q-1+\delta_2,1}_{2,\alpha/2}(1+|Du\oe|, \cdot, (d_2 - d_1)/4)||_{L^\infty(B_{d_1})} \\ & \qquad \leqslant c ||1+ |Du\oe| \ ||_{L^1(B_R)}^{(q-1+\delta_2)/2} \\ &  ||P^{s-1+\delta_2,1}_{2,\beta/2}(1+|Du\oe|, \cdot, (d_2 - d_1)/4)||_{L^\infty(B_{d_1})} \\ & \qquad \leqslant c ||1+ |Du\oe| \ ||_{L^1(B_R)}^{(s-1+\delta_2)/2} \\ & ||P^{2(q-1+\delta_2),1}_{2,\alpha}(1+|Du\oe|, \cdot, (d_2 - d_1)/4)||_{L^\infty(B_{d_1})} \\ & \qquad \leqslant c ||1+ |Du\oe| \ ||_{L^1(B_R)}^{q-1+\delta_2}  \\ & ||P^{2(s-1+\delta_2),1}_{2,\beta}(1+|Du\oe|, \cdot, (d_2 - d_1)/4)||_{L^\infty(B_{d_1})} \\ & \qquad \leqslant c ||1+ |Du\oe| \ ||_{L^1(B_R)}^{s-1+\delta_2},
    \end{cases}
\end{equation*}
with $c \equiv c(n,q,s,\alpha,\beta)$. Then, \eqref{5.55} becomes 
\begin{align*}
& || \Tilde{E}\ose(\cdot, |Du\oe|)||_{L^{\infty}(B_{d_1})} \notag \\ & \qquad \leqslant \frac{c}{(d_2 - d_1)^{n/2}}||\Tilde{E}\ose(\cdot, |Du\oe|)||^{\frac{\delta_1 \chi}{2(\chi -1)} + \frac{1}{2}}_{L^{\infty}(B_{d_2})} \left ( \int_{B_R}\Tilde{E}\ose(x, |Du\oe|) \, {\rm d}x \right )^{\frac{1}{2}}  \notag \\ & \qquad \quad + c||\Tilde{E}\ose(\cdot, |Du\oe|)||_{L^{\infty}(B_{d_2})}^{\frac{\delta_1}{2(\chi -1)} + 1-\delta_2}  ||1+ |Du\oe| \ ||_{L^1(B_R)}^{q-1+\delta_2} \notag \\ & \qquad \quad + c||\Tilde{E}\ose(\cdot, |Du\oe|)||_{L^{\infty}(B_{d_2})}^{\frac{\delta_1}{2(\chi -1)} + 1-\delta_2}  ||1+ |Du\oe| \ ||_{L^1(B_R)}^{s-1+\delta_2} \notag \\ & \qquad \quad + c||\Tilde{E}\ose(\cdot, |Du\oe|)||_{L^{\infty}(B_{d_2})}^{\frac{\delta_1}{2(\chi -1)}+ 1 +(1+q)\frac{\delta_1}{2} -\frac{\delta_2}{4}}  ||1+ |Du\oe| \ ||_{L^1(B_R)}^{q-1+\delta_2} \notag \\ & \qquad \quad + c||\Tilde{E}\ose(\cdot, |Du\oe|)||_{L^{\infty}(B_{d_2})}^{\frac{\delta_1}{2(\chi -1)}+ 1 +(1+s)\frac{\delta_1}{2} -\frac{\delta_2}{4}}  ||1+ |Du\oe| \ ||_{L^1(B_R)}^{s-1+\delta_2} + c,
\end{align*}
with $c \equiv c(n,q,s,||a||_{C^{0,\alpha}}, ||b||_{C^{0,\beta}}) $. By \eqref{delta_1} we can apply Young's inequality to the above diplayed inequality, getting
\begin{align*}
  &  || \Tilde{E}\ose(\cdot, |Du\oe|)||_{L^{\infty}(B_{d_1})} \leqslant \frac{1}{2}||\Tilde{E}\ose(\cdot, |Du\oe|)||_{L^{\infty}(B_{d_2})} \\ & \qquad  +\frac{c}{(d_2-d_1)^{n\theta}}||\Tilde{E}\ose(\cdot, |Du\oe|)||^{\theta}_{L^1(B_R)} + ||Du\oe||^{\theta}_{L^1(B_R)} + c.
\end{align*}
At this point we use Lemma \ref{lemma_giusti} to obtain
\begin{align*}
      &  || \Tilde{E}\ose(\cdot, |Du\oe|)||_{L^{\infty}(B_{d})} \notag \\ & \qquad  \leqslant \frac{c}{(R-d)^{n\theta}}||\Tilde{E}\ose(\cdot, |Du\oe|)||^{\theta}_{L^1(B_R)} + ||Du\oe||^{\theta}_{L^1(B_R)} + c,
\end{align*}
where $c \equiv c(n,q,s,||a||_{C^{0,\alpha}}, ||b||_{C^{0,\beta}})$ and $\theta \equiv \theta(n,q,s,\alpha,\beta) \geqslant 1$. 
\\ \\
{\it Step 7:} Convergence
\\ \\
We recall that, by very definition \eqref{tmq}
$$\Tilde{E}\ose(x,|z|) \leqslant 2(1+H\ose(x,z)), \quad \text{for all } (x,z) \in B_r \times \R^n.$$
Therefore, using the information in the last display together with \eqref{h_1} and \eqref{con_l}, we obtain 
\begin{align*}
 || \Tilde{E}\ose(\cdot, |Du\oe|)||_{L^{\infty}(B_{d})} & \leqslant \frac{c}{(R-d)^{n\theta}} \left ( \int_{B_R} (1+H\ose(x,Du\oe)) \, {\rm d}x\right )^{\theta} + c \\ & \leqslant \frac{c}{(R-d)^{n\theta}} \left ( \mathcal{L}(u,B_R) + o_\varepsilon(\omega) + o(\varepsilon) +|B_R|\right)^{\theta} + c,
\end{align*}
with $c \equiv c(n,q,s,||a||_{C^{0,\alpha}}, ||b||_{C^{0,\beta}})$ and $\theta \equiv \theta(n,q,s,\alpha,\beta) \geqslant 1$. Now, by \eqref{tmq} we have
\begin{align} \label{ul}
||Du\oe||_{L^{\infty}(B_d)} \leqslant \frac{c}{(R-d)^{n\theta}} \left ( \mathcal{L}(u,B_R) + o_\varepsilon(\omega) + o(\varepsilon) +|B_R|\right)^{\theta} + c,
\end{align}
this means that, up to not relabelled subsequences, by \eqref{con}, 
$$ u\oe \rightharpoonup^* u_\varepsilon \quad \text{in } W^{1,\infty}(B_d),$$
for every $\varepsilon >0$. Therefore, letting $\omega \to 0$,  \eqref{ul} implies
\begin{equation*}
  ||Du_\varepsilon||_{L^{\infty}(B_d)} \leqslant \frac{c}{(R-d)^{n\theta}} \left ( \mathcal{L}(u,B_R) + o(\varepsilon) +|B_R|\right)^{\theta} + c.
\end{equation*}
We now recall \eqref{con2} to get
$$ u_\varepsilon \rightharpoonup^* u \quad \text{in } W^{1,\infty}(B_d).$$
Hence, we can conclude that
\begin{align*}
  ||Du||_{L^{\infty}(B_d)} \leqslant \frac{c}{(R-d)^{n\theta}} \left ( \mathcal{L}(u,B_R) +|B_R|\right)^{\theta} + c,
\end{align*}
with $c$ and $\theta$ as above. We note that the choices $d=r/2$ and $R=r$ are admissible, so \eqref{l_infty_estimate} follows. \\ \\
{\em Step 8:} $Du$ is locally H\"older continuous.
\\ \\ 
The gradient local Hölder continuity now follows by a similar argument as the one used in Sections 5.9 and 5.10 of \cite{DFM23}. 
\\ \indent
The proof is complete.
\subsection*{Proof of Corollary \ref{maincor}} Corollary \ref{maincor} directly comes from repeated applications of the arguments detailed in the proof of Theorem \ref{theorem4.1}.

\begin{rem}
We observe that we have analyzed the multidimensional scalar case. But, since the integrand has the Uhlenbeck structure
$$H(x,z) = \tilde{H}(x,|z|),$$
it is possible to extend the result to functions $u : \Omega \to \R^N$, with $N \geqslant 2$.
\end{rem}

\end{document}